\documentclass[12pt,reqno,a4paper,twoside]{amsart}%{article}
\usepackage{amssymb,amsmath,amscd,amstext,amsthm,amsfonts}
\usepackage[usenames]{color}
\addtolength{\textwidth}{2cm}
\addtolength{\textheight}{2cm}
\addtolength{\topmargin}{-2cm}
\usepackage{graphicx}
\usepackage{amsmath}
\usepackage{amssymb}
\usepackage{amsfonts}
\usepackage{multicol}
\usepackage[ansinew]{inputenc}
\usepackage{amscd}
\usepackage[mathscr]{eucal}

\newcommand{\R}{\mathbb{R}}
\newcommand{\Pp}{\mathbb{P}}

\newcommand{\E}{\mathbb{E}}

\newcommand{\Dsk}{\mathbb{D}}

\newcommand{\N}{\mathbb{N}}
\newcommand{\Z}{\mathbb{Z}}

\newcommand{\Prob}{\mbox{Prob}}

\newcommand{\Bscr}{\mathscr{B}}
\newcommand{\Cscr}{\mathscr{C}}

\newcommand{\Nscr}{\mathscr{N}}

\newcommand{\Fscr}{\mathscr{F}}
\newcommand{\Hscr}{\mathscr{H}}
\newcommand{\Lscr}{\mathscr{L}}

\newcommand{\Diff}{{\rm Diff}}

\newcommand{\dd}{\mathrm{d}}

\newcommand{\Mat}{{\rm M}}

\newcommand{\SL}{{\rm SL}}

\newcommand{\GL}{{\rm GL}}

\newcommand{\Mod}[1]{\left\vert{#1}\right\vert}
\newcommand{\nrm}[1]{\left\|#1\right\|}
\newcommand{\abs}[1]{\vert{#1}\vert}

\setlength{\oddsidemargin}{0.2cm}
\setlength{\evensidemargin}{0.1cm}
\setlength{\topmargin}{.5cm} \setlength{\headsep}{1cm}
\setlength{\textheight}{19cm} \setlength{\textwidth}{15.5cm}
\setlength{\footskip}{2cm}

\newcommand{\blob}{\rule[.2ex]{.8ex}{.8ex}\;}

\newcommand{\spec}{{\rm spec}}
\newcommand{\Irred}{\mathfrak{C}}

\newtheorem{defi}{Definition}

\newtheorem{lema}{Lemma}
\newtheorem{rmk}{Remark}

\newtheorem*{alg*}{Algorithm}
\newtheorem{prop}{Proposition}
\newtheorem{teor}{Theorem}
\newtheorem*{teor*}{Theorem}

\newcommand{\one}{{\bf 1}}
\newcommand{\FF}{\mathcal{F}}
\newcommand{\coment}[1]{}

\begin{document}

\title[Approximating Lyapunov Exponents]{Approximating Lyapunov Exponents and Stationary Measures}

\author[A.~Baraviera]{Alexandre Baraviera}
\address{ Instituto de Matem\'atica e Estat\'{\i}stica-UFRGS}
\email{baravi@mat.ufrgs.br}

\author[P.~Duarte]{Pedro Duarte}
\address{Centro de Matem\'atica, Aplica\c{c}\~{o}es Fundamentais e Investiga\c{c}\~{a}o Operacional, Faculdade de Ci\^{e}ncias, Universidade de Lisboa, 1749-016 Lisboa, Portugal}
\email{pmduarte@fc.ul.pt}

\date{\today}

\maketitle

%%\tableofcontents

\begin{abstract}
We give a new proof of E. Le Page's theorem on the H\"older continuity
of the first Lyapunov exponent in the class of irreducible Bernoulli cocycles.
This suggests an algorithm to approximate the first Lyapunov exponent,
as well as the stationary measure, for such random cocycles.

Keywords: Lyapunov exponent, random cocycle, stationary measure.

AMS subject classification: 37H15, 37D25.
\end{abstract}

\section{Introduction }

The description of the behavior of random linear cocycles is a classical subject studied in different
mathematical fields: it can be seen as a non-commutative random walk in Probability
 Theory, it relates to discrete Schr\"odinger operators describing a particle under a random
 potential in Quantum Mechanics, and it can also be regarded as toy model for the differential's dynamics of an ergodic diffeomorphism over
 a compact manifold in Dynamical Systems.
  An important feature in all these settings are the Lyapunov exponents (LE),
describing the exponential growth of norms of vectors under the action of the linear cocycle.

In Probability Theory the top Lyapunov exponent of a random Bernoulli cocycle measures the asymptotic behaviour $L_1:=\lim_{n\to +\infty} \frac{1}{n}\log \nrm{M_n} $ of the matrix products
$M_n=X_{n-1}\,\cdots \, X_1\, X_0 $ of   i.i.d. processes $\{X_n\}_{\geq 0}$ with values in some matrix group like $\GL_d(\R)$.
In this context, H. Furstenberg~\cite{F} gave an explicit integral formula for the largest Lyapunov exponent in terms of a stationary measure
for the action of the i.i.d. process $\{X_n\}$ on the projective space $\Pp(\R^d)$.
He also gave simple sufficient conditions
 for the top Lyapunov exponent to be non zero.

Such random linear cocycles can be described by the choice of a compact metric space $\Sigma$, a Borel probability measure $\mu$ on $\Sigma$ and a
measurable function $A\colon \Sigma\to\GL_d(\R)$.
If $\{Z_n\}_{n\geq 0}$ is a $\Sigma$-valued i.i.d. process with common distribution $\mu$, then
$X_n=A(Z_n)$ is an i.i.d. $\GL_d(\R)$-valued process
which determines a random Bernoulli cocycle.

 A natural question that arises is the continuity of the dependence of the top Lyapunov exponent $L_1$ as
 a function of  $A$ and  $\mu$.  A related important question is how to get good estimates for  the top Lyapunov exponent of a given cocycle. Because  Furstenberg's formula depends on a stationary measure which typically is not known in any explicit way,
 this problem has no obvious solution. The issue here is precisely to  estimate the stationary measure.  
Similar results were obtained recently by S. Galatolo et. al. (see \cite{GMN,GN}),  
regarding the problem of approximating  invariant measures, but working directly with transfer operators acting on measures and densities.

  Fixing the measure $\mu$ and assuming that the matrices $A$ preserve some cone family  (which means that the cocycle is uniformly hyperbolic)  Ruelle~\cite{Rue} was able to show that the top Lyapunov exponent
 depends analytically on $A$. In this setting M. Pollicott~\cite{Pol} obtained also
 a very efficient method to approximate the exponent numerically.

 On the other hand, dropping the uniform hyperbolicity assumption makes the continuity issue much more subtle and less regular. E. Le Page~\cite{LePage} was
able to show, under some general irreducibility assumption, a H\"older continuous dependence of the top Lyapunov exponent
as a function of  $A$. In this same setting, an example due to B. Halperin  (see Simon-Taylor~\cite{ST}) shows that the H\"older modulus of continuity can not be improved.

 Now if $\Sigma=\{1,\ldots, k\}$ is finite then the function $A$ takes a finite number of values $A_1,\ldots, A_k$. Considering a probability measure
  $\mu = p_1  \delta_{1} + \cdots + p_k \delta_{k}$ on $\Sigma$ with  $p_i > 0$ for all $i=1, \ldots, k$
  and $\sum_{j=1}^k p_j=1$,
 Y. Peres~\cite{Per} was able to prove the analiticity of the top Lypaunov exponent continuity with respect to the measure $\mu$.

 In this text we revisit these continuity results
  dealing with the dependence on $A$ and on $\mu$ in a unified way,  re-obtaining Le Page's result with a simpler proof and partially improving on Peres' result (see Theorem~\ref{teor joint cont}  and Remark~\ref{rmk LePage, Peres}).
We work with the adjoint of the usual transfer operator acting on probability measures. Under a suitable irreducibility assumption, this adjoint operator, still referred as a transfer operator, acts on spaces of H\"older continuous observables with nice contracting properties: it is a quasi-compact operator with simple maximal eigenvalue~\cite{Bou88,LePage}.
 The technique
 gives a method to approximate the stationary measure in Furstenberg formula when the original transfer operator
 is replaced by a finite-dimensional approximation (see Theorem~\ref{teor approx}), what also provides a way to estimate the top Lyapunov  exponent.   
At the end  we illustrate the method
with a couple of examples.

\bigskip

The paper is organized as follows:
In Section 2 we introduce the main concepts, definitions,
and we state our result on the continuity of the top Lyapunov exponent (Theorem~\ref{teor joint cont}). In Section 3 we define the main tool to deal with stationary measures, the so called transfer operators. Here we prove an abstract continuity theorem for transfer operators (see Theorem~\ref{teor abstr cont}). In sections 4 and 5 we prove Theorem~\ref{teor joint cont}.
In Section 6 we state and prove an approximation theorem (Theorem~\ref{teor approx}). We also describe a method to estimate the stationary measure and
the top Lyapunov exponent of a random cocycle.
In Section 7 we illustrate the approximation method with a couple of examples.
Section 8 is an appendix where we establish some geometric inequalities.

\bigskip

\section{Random linear cocycles}

A measure preserving transformation is a tuple $(T,\Omega,\Fscr,\Pp)$ where $T\colon \Omega\to \Omega$  is a bi-measurable automorphism of the measurable space $(\Omega,\Fscr)$, and
 $(\Omega,\Fscr,\Pp)$ is a probability space such that
$\Pp(T^{-1}E)=\Pp(E)$ for all $E\in\Fscr$. Such a transformation $(T,\Omega,\Fscr,\Pp)$
is said to be {\em ergodic} when $\Pp(E)=0$ or $\Pp(E)=1$ for every $E\in\Fscr$ such that $T^{-1} E=E$.

We call {\em  linear cocycle} over $(T,\Omega,\Fscr,\Pp)$
to a map
$F_A\colon \Omega\times \R^d\to\Omega\times\R^d$
of the form $F_A(\omega, v):=(T\omega, A(x)\,v)$,
determined by a measurable function $A\colon \Omega \to \GL_d(\R)$. Since $F_A$ is determined by $A$, we will refer to $A$ as the linear cocycle.
The cocycle $A$ is called {\em integrable} when
$$ \E( \log_+ \nrm{A^{\pm 1}} ):=\int_\Omega
\log_+ \nrm{A^{\pm 1}}\, \dd \Pp  <+\infty .$$

The iterates $F_A^n$ of the cocycle $A$ are given by
$ F_A^n(\omega, v)= (T^n, A^n(\omega) v)$ where
$$
A^n(\omega) := \left\{\begin{array}{lll}
A(T^{n-1}\omega) \cdots A(T\omega)\, A(\omega) & \text{ if } & n\geq 0 \\
A(T^{-n}\omega)^{-1} \cdots A(T^{-2}\omega)^{-1}\, A(T^{-1} \omega)^{-1} & \text{ if } & n < 0
\end{array} \right. $$

The top Lyapunov exponent of a linear cocycle $A$ is the first of the following two limits established by H. Furstenberg and H. Kesten~\cite{FKe}:

\begin{teor*}[Furstenberg-Kesten]
Let $(T,\Omega,\Fscr,\Pp)$ be an ergodic transformation,
and $A\colon \Omega\to \GL_d(\R)$ an integrable measurable random variable.
Then the the following  limits exist   $\Pp$-almost surely,
\begin{align*}
\gamma_+(A) &= \lim_{n\to \pm \infty} \frac{1}{n}\,\log \nrm{A^n(\omega)}, \\
\gamma_-(A) &= \lim_{n\to \pm \infty} \frac{1}{n}\,\log \nrm{A^n(\omega)^{-1}}^{-1} .
\end{align*}
\end{teor*}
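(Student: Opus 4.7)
The plan is to invoke Kingman's subadditive ergodic theorem. Set $f_n(\omega) := \log \nrm{A^n(\omega)}$ for $n \geq 1$. The cocycle identity $A^{n+m}(\omega) = A^n(T^m\omega)\,A^m(\omega)$, combined with submultiplicativity of the operator norm, yields the subadditive relation
\[
f_{n+m}(\omega) \;\leq\; f_m(\omega) \,+\, f_n(T^m\omega).
\]
Integrability of $f_1^+ = \log_+\nrm{A}$ is built into the hypothesis, so Kingman's theorem delivers a $\Pp$-a.s. limit of $\frac{1}{n}f_n(\omega)$ valued in $[-\infty,+\infty)$, and ergodicity of $T$ forces it to be $\Pp$-a.s. constant; this constant is $\gamma_+(A)$.

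To rule out the degenerate Kingman value $-\infty$, I bound from below using $\nrm{A^n(\omega)} \ge \nrm{A^n(\omega)^{-1}}^{-1}$ together with the submultiplicative estimate $\nrm{A^n(\omega)^{-1}} \le \prod_{i=0}^{n-1}\nrm{A(T^i\omega)^{-1}}$; after taking logs, Birkhoff's theorem applied to $\log_+\nrm{A^{-1}}$ gives
\[
\gamma_+(A) \;\ge\; -\E\bigl(\log_+\nrm{A^{-1}}\bigr) \;>\; -\infty.
\]
This is precisely where the two-sided integrability assumption on $A$ is used.

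The limit as $n \to -\infty$ is handled by the same scheme applied to the cocycle $B(\omega):=A(T^{-1}\omega)^{-1}$ over the ergodic transformation $T^{-1}$: the formula for negative iterates exhibited in the excerpt shows $B^m(\omega) = A^{-m}(\omega)$, so Kingman applied to $\log \nrm{B^m(\cdot)}$ produces $\lim_{m\to+\infty}\frac{1}{m}\log\nrm{A^{-m}(\omega)}$, from which the limit $\lim_{n\to -\infty}\frac{1}{n}f_n(\omega)$ follows at once. The statement for $\gamma_-(A)$ is obtained identically after replacing $f_n$ by $g_n(\omega) := \log \nrm{A^n(\omega)^{-1}}$, whose subadditivity is the ``reversed'' cocycle identity $A^{n+m}(\omega)^{-1} = A^m(\omega)^{-1}\,A^n(T^m\omega)^{-1}$, and swapping the roles of $A$ and $A^{-1}$ throughout the lower bound. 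The main obstacle is essentially bookkeeping: identifying the correct subadditive quantity and the correct direction of time in each of the four limits, and invoking two-sided integrability in the right place to keep Kingman's output finite.
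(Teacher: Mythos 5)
The paper does not prove this theorem: it is stated as a citation of the classical result of Furstenberg and Kesten~\cite{FKe}, so there is no in-text proof to compare against. That said, your route through Kingman's subadditive ergodic theorem is exactly the standard textbook proof. The forward case is handled correctly: the subadditive estimate $f_{n+m}\le f_m + f_n\circ T^m$ is right, the integrability of $f_1^+$ is exactly what the hypothesis supplies, ergodicity yields a constant, and your finiteness argument via $\log\nrm{A^n(\omega)}\ge-\sum_{i=0}^{n-1}\log_+\nrm{A(T^i\omega)^{-1}}$ and Birkhoff for $\log_+\nrm{A^{-1}}$ is correct. The $\gamma_-$ case via $g_n=\log\nrm{A^n(\omega)^{-1}}$ is likewise fine.

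One caveat, which is more a defect of the statement as printed than of your argument, but which your phrase ``follows at once'' silently glosses over. Your $B$-cocycle trick correctly shows that $\lim_{m\to+\infty}\frac{1}{m}\log\nrm{A^{-m}(\omega)}$ exists $\Pp$-a.s., hence so does $\lim_{n\to-\infty}\frac{1}{n}\log\nrm{A^n(\omega)}$. However, the Kingman constant for the time-reversed cocycle $B^m(\omega)=A^m(T^{-m}\omega)^{-1}$ is $\inf_m\frac1m\E\log\nrm{A^m(\omega)^{-1}}$, so the backward limit of $\frac1n\log\nrm{A^n}$ is $\gamma_-(A)$, \emph{not} $\gamma_+(A)$. (Take $A$ constant equal to $\mathrm{diag}(2,3)$: the forward limit is $\log 3$, the backward limit is $\log 2$.) The $\lim_{n\to\pm\infty}$ notation in the statement suggests the two agree, which is false in general; the correct pairing is
\[
\gamma_+(A)=\lim_{n\to+\infty}\tfrac1n\log\nrm{A^n}=\lim_{n\to-\infty}\tfrac1n\log\nrm{A^n(\omega)^{-1}}^{-1},
\qquad
\gamma_-(A)=\lim_{n\to-\infty}\tfrac1n\log\nrm{A^n}=\lim_{n\to+\infty}\tfrac1n\log\nrm{A^n(\omega)^{-1}}^{-1}.
\]
Your proof establishes existence of all four limits, which is the substantive content, but you should identify which constant each backward limit actually equals rather than implying it coincides with the corresponding forward one. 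Everything else is sound.
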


\bigskip

 Given a matrix $M\in\Mat_d(\R)$ its {\em  singular values} are the square roots of the eigenvalues of the positive semi-definite symmetric matrix $M^T M$. The sorted singular values of $M$ are denoted by
$$ s_1(M)\geq s_2(M)\geq \cdots \geq s_d(M) \geq 0 .$$
The first singular value is the matrix's norm, $s_1(M)=\nrm{M}$,    which reflects the maximum expansion factor of $M$'s action on the Euclidean space $\R^d$. Likewise, the last singular value $s_d(M)$ is the minimum expansion factor of $M$.
One has $s_d(M)=0$ if $M$ is non-invertible,
and $s_d(M)=\nrm{M^{-1}}^{-1}$ otherwise.

Given $k\in\N$, we denote by $\wedge_k M$
the $k$-th exterior power of $M$. This is a  matrix
which represents the action of $M$ on the space $\wedge_k\R^d$ of $k$-th exterior vectors of $\R^d$ (see~\cite{Sternberg}), and whose norm can be expressed in terms of singular values
$$ \nrm{\wedge_k M} = s_1(M)\, s_2(M)\,\cdots\, s_k(M) .$$
It follows that for all $i=1,\ldots, d$,
$$ s_i(M)= \frac{\nrm{\wedge_i M}}{\nrm{\wedge_{i-1} M}} . $$

Let now $A\colon \Omega\to \GL_d(\R)$ be an integrable cocycle over some ergodic transformation
$(T,\Omega,\FF,\Pp)$. The ordered {\em Lyapunov exponents} of $A$ are defined to be the $\Pp$-almost sure limits
$$ L_i(A) = \lim_{n\to +\infty} \frac{1}{n}\,\log s_i( A^n ) = \lim_{n\to +\infty} \frac{1}{n}\,\left( \log \nrm{\wedge_i A^n}  -
\log \nrm{\wedge_i A^n}   \right) , $$
where the right-hand-side limit  exists by Furstenberg-Kesten's theorem applied to the integrable  exterior power cocycles $\wedge_i A$.
One has of course
$$ \gamma_+(A)= L_1(A)\; \text{ and }\;
\gamma_-(A)= L_d(A) .$$

From now on we will use only the notation $L_1(A)$
for the top Lyapunov exponent.

\bigskip

%% Bernoulli shift
Given  a compact metric space $(\Sigma,d)$ (the {\em symbol space})
consider the space of sequences $\Omega_\Sigma=\Sigma^\Z$ endowed with the product topology. The homeomorphism $T\colon \Omega_\Sigma\to\Omega_\Sigma$,
$T\{\omega_i\}_{i\in \Z} := \{\omega_{i+1}\}_{i\in \Z} $, is called the {\em full shift map}.

Denote by $\Prob(\Sigma)$ the space of Borel probability measures on $\Sigma$.
For a given measure $\mu \in \Prob(\Sigma)$ consider the  product probability measure
$\Pp_\mu = \mu^\Z$ on $\Omega_\Sigma$. Then
$(T,\Omega_\Sigma,\Bscr,\Pp_\mu)$ is an ergodic transformation, referred as a  {\em full Bernoulli shift}.

\bigskip

%% Define Random Cocycle
A probability $\mu\in\Prob(\Sigma)$ and a continuous
function $A\colon \Sigma\to\GL_d(\R)$   determine
a measurable function $\tilde A\colon \Omega_\Sigma\to \GL_d(\R)$, $\tilde A\{\omega_n\}_{n\in\Z} = A(\omega_0)$, and hence
a linear cocycle $F_{(A,\mu)}\colon \Omega_\Sigma\times\R^d\to\Omega_\Sigma\times\R^d$
over the Bernoulli shift $(T,\Omega_\Sigma,\Bscr_\Sigma,\Pp_\mu)$.
We refer to the cocycle $F_{(A,\mu)}$ as a {\em random cocycle}.  The pair
$(A,\mu)\in\Cscr(\Sigma,\GL_d(\R))\times \Prob(\Sigma)$
is also referred as a random cocycle. The $n$-th iterate   $F_{(A,\mu)}^n = F_{(A^n,\mu^n)}$ is the random cocycle determined by the pair  $(A^n,\mu^n)$ where $\mu^n := \mu\times \cdots \times \mu
 \in\Prob(\Sigma^n)$ and $A^n\colon \Sigma^n\to \GL_d(\R)$
 is the function
 $$ A^n(x_0,x_1,\ldots, x_{n-1})
 :=A(x_{n-1}) \cdots A(x_1)\, A(x_0).$$
The top Lyapunov exponent of the random cocycle $(A,\mu)$ will be denoted by  $L_1(A,\mu)$.

\bigskip

Given a matrix $A\in \GL_d(\R)$ we denote by $\Phi_A\colon \Pp(\R^d)\to \Pp(\R^d)$ its projective action.

%% Define stationary measure
\begin{defi}
A   measure $\nu \in \Prob( \Pp(\R^d))$ is called stationary w.r.t.   $(A,\mu)$ when
$$ \nu = \int_{\Sigma}  (\Phi_{A(x)})_\ast \nu \, d\mu(x) . $$
\end{defi}

\bigskip

%% Furstenberg Formula
\begin{prop}[Furstenberg's formula~\cite{Fur}]
\label{prop Furstenber's formula}
For any random cocycle $(A,\mu)$ there exist  stationary measures
$\nu\in\Prob(\Pp(\R^d))$ such that
\begin{equation}
\label{Furstenberg formula}
L_1(A,\mu)= \int_{\Sigma} \int_{\Pp(\R^d)} \log \nrm{ A(x)\, p}\, d\nu(\hat p)\, d\mu(x) .
\end{equation}
\end{prop}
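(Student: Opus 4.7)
The plan is to prove existence of a stationary measure by a soft fixed-point argument and then to extract the integral identity from a Birkhoff average on a skew-product over $\Omega_\Sigma\times\Pp(\R^d)$. For existence, consider the transfer operator $\Lscr^\ast\colon\Prob(\Pp(\R^d))\to\Prob(\Pp(\R^d))$ defined by $\Lscr^\ast\nu:=\int_\Sigma (\Phi_{A(x)})_\ast\nu\,\dd\mu(x)$. Continuity of $A$ combined with compactness of $\Sigma$ makes $\Lscr^\ast$ affine and weak-$\ast$ continuous, while $\Prob(\Pp(\R^d))$ is a compact convex subset of $\Cscr(\Pp(\R^d))^\ast$ in that topology; the Schauder--Tychonoff theorem (equivalently Markov--Kakutani, since $\Lscr^\ast$ is affine) then supplies a fixed point.

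To derive the formula, I would pair any stationary $\nu$ with $\Pp_\mu$ to obtain a probability $\Pp_\mu\otimes\nu$ on $\Omega_\Sigma\times\Pp(\R^d)$; stationarity of $\nu$ is precisely invariance of $\Pp_\mu\otimes\nu$ under the skew-product $\hat{T}(\omega,\hat{v}):=(T\omega,\Phi_{A(\omega_0)}\hat{v})$. The telescoping cocycle identity
\[
\log\nrm{A^n(\omega)\,v}=\sum_{k=0}^{n-1}\log\nrm{A(\omega_k)\,v_k},\qquad \hat{v}_k:=\Phi_{A^k(\omega)}\hat{v},
\]
(with $v$ and $v_k$ unit representatives of $\hat v$ and $\hat v_k$) displays $\tfrac{1}{n}\log\nrm{A^n(\omega)\,v}$ as a Birkhoff average of $\phi(\omega,\hat v):=\log\nrm{A(\omega_0)\,v}$ along $\hat T$. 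Birkhoff's theorem then produces a $\hat T$-invariant limit $\Phi(\omega,\hat v)$ whose integral against $\Pp_\mu\otimes\nu$ equals the right-hand side of Furstenberg's formula. The pointwise bound $\nrm{A^n\,v}\leq\nrm{A^n}$ and the Furstenberg--Kesten theorem force $\Phi\leq L_1(A,\mu)$ $(\Pp_\mu\otimes\nu)$-almost everywhere, so that the integral $\int\phi\,\dd(\Pp_\mu\otimes\nu)\leq L_1(A,\mu)$ for \emph{every} stationary $\nu$.

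The main obstacle is the reverse inequality, which obliges a judicious choice of $\nu$. I would let $\hat e_n^\ast(\omega)\in\Pp(\R^d)$ denote the top singular direction of $A^n(\omega)$, for which $\nrm{A^n(\omega)\,e_n^\ast(\omega)}=\nrm{A^n(\omega)}$, and consider the Ces\`aro averages $\bar\nu_N:=\tfrac{1}{N}\sum_{n=0}^{N-1}(\hat e_n^\ast)_\ast\Pp_\mu$. A routine computation (using that the defect $\Lscr^\ast(\hat e_n^\ast)_\ast\Pp_\mu-(\hat e_{n+1}^\ast)_\ast\Pp_\mu$ telescopes in the Ces\`aro sum) shows that any weak-$\ast$ accumulation point $\nu$ of $\{\bar\nu_N\}$ is stationary. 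Oseledets' theorem (or a direct diagonalisation argument on $\wedge_2 A$) then furnishes, for $\Pp_\mu$-a.e.\ $\omega$, a limit direction $\hat u(\omega)$ of the $\hat e_n^\ast(\omega)$ for which $\tfrac{1}{n}\log\nrm{A^n(\omega)\,u(\omega)}\to L_1(A,\mu)$; integrating against the resulting $\nu$ and $\mu$ produces the matching lower bound, closing the inequality. The delicate point is therefore verifying the almost-sure convergence and stationarity of the top-singular-direction distribution without importing machinery heavier than Furstenberg--Kesten.
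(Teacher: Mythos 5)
The existence argument and the derivation of the upper bound $\int_\Sigma\int_{\Pp(\R^d)}\log\nrm{A(x)p}\,\dd\nu\,\dd\mu\leq L_1(A,\mu)$ (for \emph{every} stationary $\nu$) via the telescoping cocycle identity, Birkhoff, and Furstenberg--Kesten are sound, modulo a routine integrability check on $\phi$. The gap is in the construction supplying the matching lower bound. You assert that the defect $\Lscr^\ast(\hat e_n^\ast)_\ast\Pp_\mu-(\hat e_{n+1}^\ast)_\ast\Pp_\mu$ telescopes, but this would require $\Lscr^\ast(\hat e_n^\ast)_\ast\Pp_\mu=(\hat e_{n+1}^\ast)_\ast\Pp_\mu$, i.e.\ that the distribution of $\Phi_{A(x)}\hat e_n^\ast(\omega)$ (with $x\sim\mu$ drawn independently) coincides with the distribution of the top singular direction of $A^{n+1}(\omega)=A(\omega_n)A^n(\omega)$. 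That identity is false: the top right singular direction of a product $B\,M$ is not $\Phi_B$ applied to the top right singular direction of $M$, so there is no exact (nor, without further estimates, approximate) equivariance to drive the telescoping. Consequently the weak-$\ast$ limits of $\bar\nu_N$ need not be stationary, and this step does not go through.

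There is a second, related difficulty even if one replaces $\hat e_n^\ast(\omega)$ by its almost-sure limit $\hat u(\omega)$ (which in any case requires $L_1>L_2$, an assumption not available in the statement you are proving). The limit of the top right singular directions of $A^n(\omega)=A(\omega_{n-1})\cdots A(\omega_0)$ is a function of the \emph{future} coordinates $\omega_0,\omega_1,\dots$; it is therefore not independent of $\omega_0$, and it is not equivariant in the sense $\hat u(T\omega)=\Phi_{A(\omega_0)}\hat u(\omega)$ in the form needed to make $(\hat u)_\ast\Pp_\mu$ stationary. The object that actually works is the direction determined by the \emph{past}, namely the limit of the most-expanded (left-singular) directions of $A(\omega_{-1})\cdots A(\omega_{-n})$: this is measurable w.r.t.\ $\sigma(\dots,\omega_{-2},\omega_{-1})$, hence independent of $\omega_0$, and it does satisfy $\hat u^-(T\omega)=\Phi_{A(\omega_0)}\hat u^-(\omega)$, making its law stationary and letting the Birkhoff average close the inequality. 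But establishing the existence and convergence properties of $\hat u^-$ is precisely the multiplicative ergodic theorem, the ``machinery heavier than Furstenberg--Kesten'' you hoped to avoid. The classical alternative is to avoid singular directions entirely: decompose the $\hat T$-invariant measure $\Pp_\mu\otimes\nu$ into ergodic components, note each projects to $\Pp_\mu$ and hence corresponds to a stationary measure on $\Pp(\R^d)$, and show that at least one ergodic component has Birkhoff limit equal to $L_1$. As written, the proposal identifies the right obstruction but does not overcome it.
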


\bigskip

%% Irreducible cocycles
\begin{defi}[see D\'{e}finition 2.7 in~\cite{Bou88}]
A cocycle $(A,\mu)$ is called quasi-irreducible  if
there is no proper subspace  $V\subset \R^d$ which is invariant under all matrices of the cocycle, i.e., such that
$A(x) V=V$ for $\mu$-a.e. $x\in \Sigma$, and where $L_1(A\vert_V)\leq L_2(A)$.
\end{defi}

\begin{rmk}
If a cocycle $(A,\mu)$ is quasi-irreducible then it admits a unique stationary measure $\nu\in \Prob(\Pp(\R^d))$. Thus, in this case $L_1(A,\mu)$ is uniquely determined by the probability $\nu$ through Furstenberg's formula~\eqref{Furstenberg formula}.
\end{rmk}

\bigskip

The space of cocycles $\Cscr(\Sigma,\GL_d(\R))$
is endowed with the distance
$$ d_\infty(A,B) = \max_{x\in \Sigma} \nrm{A(x)-B(x)} .$$

On the space $\Prob(\Sigma)$ we consider the total variation  metric, which is defined by
$$ d(\mu_1,\mu_2):= \nrm{\mu_1-\mu_2}   $$
%    with the convention $\frac{0}{0}=1$ and
where $\nrm{\mu}$ stands for the total variation of a measure $\mu$.

\bigskip

\begin{defi}
We define the space $\Irred_d(\Sigma)$   of all cocycles $(A,\mu)\in \Cscr(\Sigma,\GL_d(\R))\times\Prob(\Sigma)$ such that
\begin{enumerate}
\item $(A,\mu)$ is quasi-irreducible and
\item $L_1(A,\mu)>L_2(A,\mu)$.

\end{enumerate}

\end{defi}

\begin{teor}
\label{teor joint cont}
The function
 $L_1\colon \Irred_d(\Sigma) \to\R$ is

\begin{enumerate}
\item locally H\"older continuous  w.r.t. the metrics $d_\infty$ and $d$,
\item is locally Lipschitz continuous in the variable $\mu$ w.r.t. $d$.
\end{enumerate}
\end{teor}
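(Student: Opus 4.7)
The plan is to reduce the continuity of $L_1$ to the continuity of the stationary measure $\nu=\nu_{(A,\mu)}$ via Furstenberg's formula from Proposition~\ref{prop Furstenber's formula}. Since $(A,\mu)\in\Irred_d(\Sigma)$ implies uniqueness of $\nu$, we have
\begin{equation*}
L_1(A,\mu)=\int_\Sigma\int_{\Pp(\R^d)}\log\nrm{A(x)\,p}\,d\nu_{(A,\mu)}(\hat p)\,d\mu(x).
\end{equation*}
Fixing a small neighborhood of a reference cocycle $(A_0,\mu_0)\in\Irred_d(\Sigma)$, the integrand is uniformly bounded and Lipschitz in $A$ (w.r.t.\ $d_\infty$), and $\alpha$-H\"older in $\hat p$ for a suitable $\alpha>0$. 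Hence everything comes down to controlling how $\nu_{(A,\mu)}$ moves with $(A,\mu)$ when tested against $\alpha$-H\"older functions on $\Pp(\R^d)$, together with the obvious linear (hence Lipschitz) dependence on the outer $\mu$.

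The main tool is the transfer operator
\begin{equation*}
(\Lscr_{(A,\mu)}f)(\hat p):=\int_\Sigma f(\Phi_{A(x)}\hat p)\,d\mu(x)
\end{equation*}
acting on $C^\alpha(\Pp(\R^d))$. By the results of Le~Page and Bougerol recalled in the introduction, quasi-irreducibility together with $L_1>L_2$ makes $\Lscr_{(A,\mu)}$ quasi-compact with simple maximal eigenvalue $1$ and a spectral gap, and $\nu_{(A,\mu)}$ is the unique probability eigenvector of the adjoint. The abstract continuity theorem~\ref{teor abstr cont} then converts the problem into bounding the operator distance $\nrm{\Lscr_{(A,\mu)}-\Lscr_{(B,\eta)}}_{C^\alpha\to C^\alpha}$ in terms of $d_\infty(A,B)$ and $d(\mu,\eta)$.

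I would split this operator difference by the triangle inequality into $\Lscr_{(A,\mu)}-\Lscr_{(B,\mu)}$ and $\Lscr_{(B,\mu)}-\Lscr_{(B,\eta)}$. For the second term, linearity in the measure gives
\begin{equation*}
((\Lscr_{(B,\mu)}-\Lscr_{(B,\eta)})f)(\hat p)=\int_\Sigma f(\Phi_{B(x)}\hat p)\,d(\mu-\eta)(x),
\end{equation*}
whose $C^\alpha$-norm is controlled by $\nrm{f}_{C^\alpha}\,d(\mu,\eta)$, yielding a genuinely \emph{Lipschitz} estimate in $\mu$. For the first term, the perturbation comes from precomposing $f$ with either $\Phi_{A(x)}$ or $\Phi_{B(x)}$; since nearby matrices produce projective maps at Fubini--Study distance comparable to $d_\infty(A,B)$ (uniformly in $x$ and $\hat p$, in a neighborhood of $A_0$ on the compact space $\Sigma$), and $f\in C^\alpha$, the difference is bounded by $\nrm{f}_{C^\alpha}\,d_\infty(A,B)^\alpha$, giving only a H\"older estimate.

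Feeding these two bounds into Theorem~\ref{teor abstr cont} produces H\"older-continuous dependence of $\nu_{(A,\mu)}$ on $(A,\mu)$ jointly (exponent limited by $\alpha$) and Lipschitz-continuous dependence on $\mu$ alone. Inserting this back into Furstenberg's formula~\eqref{Furstenberg formula}, and noting that the outer integration against $\mu$ is itself Lipschitz in $\mu$ because the inner integral $\int\log\nrm{A(x)p}\,d\nu(\hat p)$ is a bounded continuous function of $x$, delivers both assertions~(1) and~(2). The main technical obstacle I anticipate is the uniform geometric estimate comparing $\Phi_{A(x)}$ and $\Phi_{B(x)}$ on $\Pp(\R^d)$ with constants that stay controlled as $\hat p$ approaches the kernel directions; this is presumably where the geometric inequalities assembled in the appendix are deployed.
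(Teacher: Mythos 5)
Your overall strategy matches the paper's: reduce to continuity of the stationary measure via Furstenberg's formula, split the dependence on $A$ from the dependence on $\mu$ (this is exactly the paper's Propositions~\ref{prop cont mat} and~\ref{prop cont prob}), and use the abstract transfer-operator estimate Theorem~\ref{teor abstr cont} together with the telescoping identity $\Lscr_M^n-\Lscr_{M'}^n=\sum_i \Lscr_{M'}^i(\Lscr_M-\Lscr_{M'})\Lscr_M^{n-i-1}$. The decomposition of the operator difference into a matrix piece (H\"older) and a measure piece (Lipschitz) is also exactly what the paper does.

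There is, however, one technical mischaracterization that would derail a literal execution of your plan. You propose to bound $\nrm{\Lscr_{(A,\mu)}-\Lscr_{(B,\eta)}}_{C^\alpha\to C^\alpha}$ and to control the $C^\alpha$-norm of $(\Lscr_{(B,\mu)}-\Lscr_{(B,\eta)})f$. As the paper explicitly warns just before Lemma~\ref{uniform kappa alpha}, the transfer operator does \emph{not} depend continuously on $A$ or $\mu$ in the $\Hscr_\alpha\to\Hscr_\alpha$ operator topology: $v_\alpha(\Lscr_A\varphi-\Lscr_B\varphi)$ does not go to zero as $A\to B$ because the H\"older seminorm of $\varphi\circ\Phi_{A(x)}-\varphi\circ\Phi_{B(x)}$ stays of order $\nrm{D\Phi_A}^\alpha+\nrm{D\Phi_B}^\alpha$. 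The mechanism of Theorem~\ref{teor abstr cont} is weaker and asymmetric: one bounds only the $\Hscr_\alpha\to\Cscr$ quantity $\Delta_\alpha$ (i.e.\ $\nrm{(\Lscr_M-\Lscr_{M'})\varphi}_\infty\leq\Delta_\alpha\,v_\alpha(\varphi)$), the $v_\alpha$-contraction is applied to $\Lscr_M^{n-i-1}$ on the right, and the Markov property absorbs $\Lscr_{M'}^i$ on the left in $\nrm{\cdot}_\infty$. You would need to replace your operator-norm bounds by precisely these $\Delta_\alpha$-type bounds. A second, smaller omission: the abstract theorem requires a genuine uniform contraction $\kappa_\alpha<1$ on a neighbourhood, which is \emph{not} an immediate consequence of quasi-compactness/spectral gap; the paper obtains it (Lemma~\ref{uniform kappa alpha}) by first choosing a small H\"older exponent $\alpha$ and passing to a sufficiently high iterate $A^n$, and you should do the same before invoking Theorem~\ref{teor abstr cont}.
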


\begin{proof}
Follows from Propositions~\ref{prop cont mat} and~\ref{prop cont prob}.
\end{proof}

\begin{rmk}
\label{rmk LePage, Peres}
The H\"older continuous dependence of $L_1(A,\mu)$ on $A$ is basically E. Le Page's~\cite[Th\'{e}or\`{e}me 1]{LePage}.
Item (2) improves Y. Peres's~\cite[Theorem 1]{Per} in the sense that measure here may have infinite support. Our result is of course weaker in the sense that we only prove Lipschitz continuity while Y. Peres proves analiticity of the LE.
\end{rmk}

A coarser metric (and topology)
can be introduced with respect to which
the top LE is still H\"older continuous.
Let $\Diff^0(\Sigma)$ denote the $\infty$-dimensional group of   homeomorphisms
$h\colon \Sigma\to\Sigma$. Given $\mu\in\Prob(\Sigma)$
its {\em $h$-pullback}  is the probability measure
$h^\ast \mu:=(h^{-1})_\ast \mu = \mu \circ h^{-1}$.
Analogously, given a  cocycle $(A,\mu)$ we define its {\em $h$-pullback} as
$(A\circ h, h^\ast \mu)$.
The cocycles $F_{(A,\mu)}$ and $F_{(A\circ h, h^\ast \mu)}$
are conjugated via the isomorphism
$H\colon \Omega_\Sigma\times\R^d\to \Omega_\Sigma\times\R^d$,
$H(\{\omega_n\}_{n\in\Z},v) := (\{h(\omega_n)\}_{n\in\Z},v)$.
In particular we have
$$ L_1(A,\mu) = L_1(A\circ h, h^\ast \mu) .$$
Thus if one defines the metric
$$ \rho( (A,\mu), (B,\nu) ):= \inf  \left\{ \, d_\infty(B,A\circ h) + d(\nu,h^\ast \mu)\,\colon \, h\in\Diff^0(\Sigma)\, \right\}$$
the function $L_1\colon \Irred_d(\Sigma)\to \R$ is still locally H\"older
w.r.t. $\rho$.

\section{Transfer Operators}

Let $X$ be a compact metric space.
Given $0\leq \alpha \leq 1$, we denote by
$\Hscr_\alpha(X)$ the space of $\alpha$-H\"older continuous functions on $X$. On this space consider the seminorm
$v_\alpha\colon \Hscr_\alpha(X)\to \R$ defined by
$$  v_\alpha(\varphi):= \mathrm{diam}(X)^\alpha\,\sup_{\substack{x\neq y\\ x,y\in X}} \frac{\abs{\varphi(x)-\varphi(y)}}{d(x,y)^\alpha} . $$
The space $\Hscr_\alpha(X)$ is a Banach space (in fact a a Banach algebra with unity~\cite[Proposition 5.4]{DK-book}) when endowed with the norm
$$ \nrm{\varphi}_\alpha:= v_\alpha(\varphi)+\nrm{\varphi}_\infty .$$

The family of semi-normed spaces $\{ (\Hscr_\alpha(X),v_\alpha) \}_{\alpha\in [0,1]}$ is a scale in  sense that for all
$0\leq \alpha  <\beta \leq 1$, $0\leq t\leq 1$ and $\varphi\in\Hscr_\alpha(X)$,
\begin{enumerate}
\item[(A1)] $\Hscr_{\alpha}(X)\subset \Hscr_{\beta}(X)$,
\item[(A2)] $v_{\alpha} (\varphi) \leq v_{\beta}(\varphi) $,
\item[(A3)] $ v_{(1-t) \alpha + t \beta}(\varphi) \leq
v_{\alpha }(\varphi)^{1-t}v_{\beta}(\varphi) ^t $.
\end{enumerate}

Remark that
$\Hscr_0(X)$ coincides with the space $\Cscr(X)$ of continuous functions on $X$.
Moreover, for any   $\varphi\in\Cscr(X)$ and $x_0\in X$,
$$ \nrm{\varphi-\varphi(x_0)}_\infty \leq v_0(\varphi)\leq \nrm{\varphi}_\infty .$$
We denote by $\one$ the constant function $1$.

\bigskip

\begin{defi}
A linear operator
$\Lscr\colon \Cscr(X)\to \Cscr(X)$ is called a Markov operator when for all $\varphi\in\Cscr(X)$,
\begin{enumerate}
\item  $\Lscr \one =\one$,
\item  $\varphi\geq 0 $ implies $\Lscr \varphi \geq 0$,
\item $\nrm{\Lscr \varphi}_\infty \leq \nrm{\varphi}_\infty$.
\end{enumerate}
\end{defi}

\begin{defi}
Given $0<\sigma <1$ and $0<\alpha\leq 1$, a Markov operator $\Lscr\colon \Cscr(X)\to \Cscr(X)$ is said to act $\sigma$-contractively on $\Hscr_\alpha(X)$ if
for all $\varphi\in\Hscr_\alpha(X)$
$$ v_\alpha(\Lscr \varphi) \leq \sigma\, v_\alpha(\varphi) .$$
\end{defi}

\begin{rmk} If  $\Lscr$
 acts $\sigma$-contractively on $\Hscr_\alpha(X)$
then $\Lscr\colon \Hscr_\alpha(X)\to \Hscr_\alpha(X)$ is a quasi-compact operator with simple maximal eigenvalue $\lambda=1$ (see~\cite{HH}).
\end{rmk}

\begin{defi}
Let $\Lscr\colon \Cscr(X)\to\Cscr(X)$ be a Markov operator.
We call $\Lscr$-stationary probability  any measure $\nu\in \Prob(X)$
such that for all $\varphi\in\Cscr(X)$,
$$ \int_X \Lscr(\varphi)\,d\nu = \int_X\varphi\,d\nu .$$

\end{defi}

\begin{teor} Let $\Lscr\colon \Cscr(X)\to \Cscr(X)$ be  a Markov operator. If for some $0<\alpha\leq 1$
and $0<\sigma <1$  the Markov operator $\Lscr$   acts $\sigma$-contractively on $\Hscr_\alpha(X)$ then there exists a (unique)  $\Lscr$-stationary measure $\nu\in\Prob(X)$ such that defining the subspace
$$\Nscr_\alpha(\nu):=\left\{\varphi\in  \Hscr_\alpha(X)\colon \int_X \varphi\, \dd  \nu  =0 \right\} $$
one has
\begin{enumerate}
\item $\spec(\Lscr\colon \Hscr_\alpha(X)\to \Hscr_\alpha(X))\subset \{1\}\cup  \Dsk_\sigma(0)$,
\item $\Hscr_\alpha(X)= \R\,\one \oplus \Nscr_\alpha(\nu)$
is a $\Lscr$-invariant decomposition,

\item  $\Lscr$ fixes every function in $\R \,\one$ and acts as a
contraction with spectral radius $\leq \sigma$ on $\Nscr_\alpha(\nu)$.
\end{enumerate}
\end{teor}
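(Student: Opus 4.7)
My plan is to extract both the stationary measure and the spectral decomposition from one central observation: for every $\varphi \in \Hscr_\alpha(X)$, the iterates $\Lscr^n \varphi$ converge uniformly to a constant. Because $\Lscr$ is Markov, it preserves positivity and fixes scalars, so $c_n := \min_X \Lscr^n\varphi$ is nondecreasing and $C_n := \max_X \Lscr^n\varphi$ is nonincreasing. From the definition of $v_\alpha$ one sees that the oscillation is bounded by the Hölder seminorm, which combined with $\sigma$-contractivity yields $C_n - c_n \leq v_\alpha(\Lscr^n\varphi) \leq \sigma^n v_\alpha(\varphi)$. Hence both sequences converge to a common real limit $\ell(\varphi)$, and in fact $\nrm{\Lscr^n\varphi - \ell(\varphi)\,\one}_\infty \leq \sigma^n v_\alpha(\varphi)$.

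Next I would identify $\ell$ with a probability measure. The functional $\ell\colon \Hscr_\alpha(X)\to \R$ is linear, positive, normalized by $\ell(\one)=1$ and bounded by the uniform norm. Since $\Hscr_\alpha(X)$ is dense in $\Cscr(X)$ (Lipschitz functions already suffice on a compact metric space), $\ell$ extends to a state on $\Cscr(X)$, and the Riesz representation theorem furnishes the desired $\nu\in\Prob(X)$ with $\ell(\varphi)=\int_X \varphi\,\dd\nu$. Stationarity is immediate from $\ell(\Lscr\varphi)=\lim_{n\to\infty}\Lscr^{n+1}\varphi = \ell(\varphi)$. For uniqueness, any stationary $\mu$ satisfies $\int \varphi\,\dd\mu = \int \Lscr^n\varphi\,\dd\mu \to \ell(\varphi) = \int\varphi\,\dd\nu$ on $\Hscr_\alpha$, so density forces $\mu=\nu$.

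With $\nu$ in hand, the splitting $\Hscr_\alpha(X)=\R\,\one\oplus \Nscr_\alpha(\nu)$ is just the direct sum $\varphi = \bigl(\int\varphi\,\dd\nu\bigr)\one + \bigl(\varphi - (\int\varphi\,\dd\nu)\one\bigr)$, and $\Lscr$-invariance of both summands follows from $\Lscr\one=\one$ and stationarity. For the spectral bound on $\Nscr_\alpha(\nu)$, observe that $\ell(\varphi)=0$ when $\int\varphi\,\dd\nu=0$, so the key estimate reduces to $\nrm{\Lscr^n\varphi}_\infty \leq \sigma^n v_\alpha(\varphi)$; combined with $v_\alpha(\Lscr^n\varphi)\leq \sigma^n v_\alpha(\varphi)$ this gives $\nrm{\Lscr^n\varphi}_\alpha \leq 2\sigma^n\,\nrm{\varphi}_\alpha$. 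Gelfand's formula then bounds the spectral radius of $\Lscr\vert_{\Nscr_\alpha(\nu)}$ by $\sigma$, and putting this together with the fixed eigenvalue $1$ on $\R\,\one$ gives $\spec(\Lscr)\subset \{1\}\cup \overline{\Dsk_\sigma(0)}$.

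The delicate point I expect is the dual role of the uniform-convergence-to-a-constant step: it must provide enough regularity to pass from $\Hscr_\alpha$ to a Riesz measure on $\Cscr(X)$ (requiring density), and simultaneously promote a mere oscillation bound into a geometric contraction in the full norm $\nrm{\cdot}_\alpha$ on the complement $\Nscr_\alpha(\nu)$. Everything else—the direct sum decomposition, stationarity, uniqueness, and the spectral radius computation via Gelfand—is then essentially bookkeeping organized around this single estimate.
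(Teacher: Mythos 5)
Your proof is correct and takes a genuinely different, more elementary route than the paper. The paper first invokes quasi-compactness (citing Hennion--Herv\'e) together with abstract spectral theory to \emph{produce} the decomposition $\Hscr_\alpha(X)=\R\one\oplus\Nscr_\alpha$, defines the functional $\Lambda$ as the coordinate on $\R\one$, and only then shows $\Lambda$ is positive and continuous in $\nrm{\cdot}_\infty$ so that Stone--Weierstrass plus Riesz yield the measure. You instead construct the limiting functional directly: the Doob-type monotonicity observation that $c_n=\min\Lscr^n\varphi$ increases while $C_n=\max\Lscr^n\varphi$ decreases (since $\Lscr$ is positive and fixes $\one$), combined with the oscillation bound $C_n-c_n\leq v_\alpha(\Lscr^n\varphi)\leq\sigma^n v_\alpha(\varphi)$, gives $\Lscr^n\varphi\to\ell(\varphi)\one$ uniformly at geometric rate. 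Everything else --- Riesz, stationarity, uniqueness, the explicit splitting, and the Gelfand bound $\nrm{\Lscr^n\varphi}_\alpha\leq 2\sigma^n\nrm{\varphi}_\alpha$ on $\Nscr_\alpha(\nu)$ --- then follows by bookkeeping. What the paper's route buys is alignment with the general quasi-compact operator framework that is reused later; what your route buys is self-containment and explicitness, deriving the same exponential convergence without citing spectral theory and then reading the decomposition off from it rather than the other way around. One small remark: the monotonicity of $c_n$ and $C_n$ is a nice touch, but it is not strictly needed --- the inequality $\nrm{\Lscr^n\varphi-\ell(\varphi)\one}_\infty\leq\sigma^n v_\alpha(\varphi)$ already follows from the oscillation bound once one knows the sequence of constants $\bigl(\int\Lscr^n\varphi\,\dd\nu\bigr)$ is constant, though of course at the point you use it you do not yet have $\nu$, so the monotonicity argument is the right way to bootstrap.
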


\begin{proof}
By assumption  $\Lscr$ acts on quotient
space $\Hscr_\alpha(X)/\R\one$ as a $\sigma$-contraction. Since $\Lscr$ also fixes the constant functions in $\R\one$, it is a quasi-compact operator with simple eigenvalue $1$ (associated to  eigen-space $\R\,\one$)
and inner  spectral radius $\leq \sigma$.
Hence  $\spec(\Lscr)\subset \{1\}\cup  \Dsk_\sigma(0)$.

By spectral theory~\cite[Chap. XI]{RN-FA} there exists a
$\Lscr$-invariant decomposition
$\Hscr_\alpha(X)=\R\,\one \oplus \Nscr_\alpha$
such that $\Lscr$   acts as a
contraction with spectral radius $\leq \sigma$ on $\Nscr_\alpha$. Thus we can define a linear functional
$\Lambda\colon \Hscr_\alpha(X)\to\R$ setting
$\Lambda(c\,\one + \psi):=c$ where $\psi\in\Nscr_\alpha$. This functional has several properties:\\
\blob  $\Lambda(\one)=1$;\\
\blob  $\Lambda$ is positive in the sense that $\varphi\geq 0$ implies $\Lambda(\varphi)\geq 0$, since
$\Lscr$ is a Markov operator.
Given $\varphi=c\,\one + \psi \geq 0$ with
$\psi\in \Nscr_\alpha$, we have
$0\leq \Lscr^n(c\,\one+\psi) = c + \Lscr^n(\psi) $
for all $n\geq 0$.
Since $\psi\in \Nscr_\alpha$  we have $\lim_{n\to+\infty} \Lscr^n(\psi)=0$  which implies $\Lambda(\varphi)=c\geq 0$; \\
\blob $\Lambda$ is continuous w.r.t. the norm $\nrm {\cdot}_\infty$.  Indeed given any function
$\varphi\in \Hscr_\alpha(X)$
using
$$ -\nrm{\varphi}_\infty \one \leq \varphi
\leq \nrm{\varphi}_\infty \one $$
by positivity of $\Lscr$ it follows that
$$ \abs{\Lambda(\varphi)} \leq \Lambda(\one)\,\nrm{\varphi}_\infty = \nrm{\varphi}_\infty . $$
\blob $\Lambda$ extends to positive linear functional  $\tilde{\Lambda}\colon \Cscr(X)\to\R$ because by Stone-Weierstrass theorem the algebra $\Hscr_\alpha(X)$ is dense in $\Cscr(X)$;\\
\blob Finally by Riesz Theorem there exists a Borel probability  $\nu\in\Prob(X)$ such that
$\tilde{\Lambda}(\varphi)=\int_X\varphi\,d\nu$
for all $\varphi\in\Cscr(X)$.

Since by definition   $\Nscr_\alpha$ is the kernel of $\Lambda$, the relation $\Nscr_\alpha=\Nscr_\alpha(\nu)$ holds.
\end{proof}

\bigskip

Let $(\Sigma,d)$ be another compact metric space
and fix a Borel probability measure $\mu\in\Prob(\Sigma)$.
Given a continuous map  $M\colon \Sigma\times X\to X$
we consider the Markov operator
$\Lscr_{M,\mu}=\Lscr_M\colon \Cscr(X)\to\Cscr(X)$,
$$(\Lscr_M \varphi)(p):=\int_X \varphi(M(x,p))\,\dd\mu(x) . $$

\begin{rmk}
\label{LM is Markov}
The linear map
$\Lscr_M\colon\Cscr(X)\to\Cscr(X)$ is a Markov operator.
\end{rmk}

Define the following quantity
\begin{equation}
\label{kapa alpha(M)}
\kappa_\alpha(M,\mu) := \sup_{\substack{p\neq q\\ p,q\in X}}\int_X \left(\frac{d(M(x,p),M(x,q))}{d(p,q)} \right)^\alpha\, \dd \mu(x)
\end{equation}
which measures the average H\"older constant of
the function $M$ in the second argument.

The importance of this measurement is highlighted by the following proposition
\begin{prop}
\label{prop valpha kappa valpha}
 For all $\varphi\in\Hscr_\alpha(X)$,
$$ v_\alpha(\Lscr_M(\varphi))\leq \kappa_\alpha(M,\mu)\,v_\alpha(\varphi) .$$
\end{prop}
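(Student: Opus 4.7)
The plan is a direct estimate: apply the definition of $\Lscr_M$, bring the absolute value inside the integral, use the $\alpha$-H\"older bound on $\varphi$ pointwise in $x\in\Sigma$, and then recognize the resulting integrand as the quantity averaged in the definition of $\kappa_\alpha(M,\mu)$.

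More concretely, I would start by fixing $p\neq q$ in $X$ and writing
\[
(\Lscr_M\varphi)(p)-(\Lscr_M\varphi)(q) \;=\; \int_\Sigma\bigl[\varphi(M(x,p))-\varphi(M(x,q))\bigr]\,\dd\mu(x),
\]
so that by the triangle inequality for integrals
\[
\bigl|(\Lscr_M\varphi)(p)-(\Lscr_M\varphi)(q)\bigr| \;\leq\; \int_\Sigma \bigl|\varphi(M(x,p))-\varphi(M(x,q))\bigr|\,\dd\mu(x).
\]
From the definition of $v_\alpha$ I have $|\varphi(p')-\varphi(q')|\leq \mathrm{diam}(X)^{-\alpha}\,v_\alpha(\varphi)\,d(p',q')^\alpha$ for all $p',q'\in X$, so applying this with $p'=M(x,p)$ and $q'=M(x,q)$ inside the integral gives
\[
\bigl|(\Lscr_M\varphi)(p)-(\Lscr_M\varphi)(q)\bigr| \;\leq\; \mathrm{diam}(X)^{-\alpha}\,v_\alpha(\varphi)\int_\Sigma d(M(x,p),M(x,q))^\alpha\,\dd\mu(x).
\]

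Dividing both sides by $d(p,q)^\alpha$ and rearranging,
\[
\frac{\bigl|(\Lscr_M\varphi)(p)-(\Lscr_M\varphi)(q)\bigr|}{d(p,q)^\alpha} \;\leq\; \mathrm{diam}(X)^{-\alpha}\,v_\alpha(\varphi)\int_\Sigma\!\left(\frac{d(M(x,p),M(x,q))}{d(p,q)}\right)^{\!\alpha}\!\dd\mu(x).
\]
Taking the supremum over $p\neq q$ on both sides, the right-hand integral is bounded above by $\kappa_\alpha(M,\mu)$ by the definition~\eqref{kapa alpha(M)}, and then multiplying through by $\mathrm{diam}(X)^\alpha$ recovers the seminorm on the left, yielding exactly $v_\alpha(\Lscr_M\varphi)\leq \kappa_\alpha(M,\mu)\,v_\alpha(\varphi)$.

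There is no real obstacle here: the argument is a one-line H\"older estimate under the integral sign, the main bookkeeping being to keep track of the $\mathrm{diam}(X)^\alpha$ factor that appears in the normalization of $v_\alpha$ but cancels in the final step, and to verify that the measurability and integrability of $x\mapsto d(M(x,p),M(x,q))^\alpha$ follow from continuity of $M$ and compactness of $\Sigma$ so that Fubini/absolute convergence are not an issue.
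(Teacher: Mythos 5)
Your proof is correct and follows essentially the same route as the paper's: bring the absolute value inside the integral, apply the pointwise $\alpha$-H\"older bound on $\varphi$, and recognize the resulting average as $\kappa_\alpha(M,\mu)$. In fact you are a bit more careful than the paper's proof, which elides the $\mathrm{diam}(X)^{\pm\alpha}$ normalization factors in the intermediate lines (they cancel in the end, as you correctly track), so no changes are needed.
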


\begin{proof} Given $\varphi\in\Hscr_\alpha(X)$,
and $p,q\in X$,
\begin{align*}
\Mod{ (\Lscr_M(\varphi)(p) -  (\Lscr_M(\varphi)(q)  }
&\leq
\int_X \Mod{\varphi(M(x,p)) - \varphi(M(x,q)) }\, d\mu(x)\\
&\leq v_\alpha(\varphi)\,
\int_X d(M(x,p),M(x,q))^\alpha\, d\mu(x)\\
&\leq v_\alpha(\varphi)\,\kappa_\alpha(M,\mu)\, d(p,q)^\alpha
\end{align*}
which proves the proposition.
\end{proof}

Next we define a distance between two functions
$M,M'\colon \Sigma\times X\to X$.

\begin{equation}
\label{Delta alpha(M,M')}
 \Delta_\alpha(M,M'):= \sup_{p\in X}\int_\Sigma d(M(x,p),M'(x,p))^\alpha\,\dd \mu(x) .
\end{equation}

\begin{teor}
\label{teor abstr cont}
Let $M, M'\colon \Sigma\times X\to X$ be continuous functions.
Assume that
  $\kappa:= \kappa_\alpha(M)  <1$ for some $0<\alpha\leq 1$.  Then for all $n\in\N$ and  $\varphi\in\Hscr_\alpha(X)$,
\begin{equation}
\label{LMn-LM'n}
 \nrm{ \Lscr_M^n (\varphi)- \Lscr_{M'}^n (\varphi) }_\infty  \leq \frac{\Delta_\alpha(M,M')}{1-\kappa}\, v_\alpha(\varphi) .
\end{equation}
Moreover, if also  $\kappa_\alpha(M')  <1$ then   for all $\varphi\in\Hscr_\alpha(X)$,
\begin{equation}
\label{int phi nu - int phi nu'}
\Mod{\int_X \varphi\,d\nu_M - \int_X \varphi\,d\nu_{M'} } \leq \frac{\Delta_\alpha(M,M')}{1-\kappa}\, v_\alpha(\varphi) .
\end{equation}
\end{teor}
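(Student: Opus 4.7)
The plan is to reduce everything to a one-step estimate on $\Lscr_M - \Lscr_{M'}$ and then iterate via a telescoping identity.

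\textbf{One-step bound.} First I would show that for any $\psi \in \Hscr_\alpha(X)$ and $p \in X$,
\[
\bigl|(\Lscr_M\psi)(p) - (\Lscr_{M'}\psi)(p)\bigr|
\le \int_\Sigma \bigl|\psi(M(x,p))-\psi(M'(x,p))\bigr|\,\dd\mu(x)
\le v_\alpha(\psi)\int_\Sigma d(M(x,p),M'(x,p))^\alpha\,\dd\mu(x),
\]
which by definition of $\Delta_\alpha$ gives $\nrm{\Lscr_M\psi - \Lscr_{M'}\psi}_\infty \le \Delta_\alpha(M,M')\,v_\alpha(\psi)$.

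\textbf{Telescoping for (\ref{LMn-LM'n}).} The key choice is the order of the factors: write
\[
\Lscr_M^n\varphi - \Lscr_{M'}^n\varphi
= \sum_{k=0}^{n-1}\Lscr_{M'}^{\,n-1-k}\bigl(\Lscr_M - \Lscr_{M'}\bigr)\Lscr_M^{\,k}\varphi,
\]
so that the inner iterates are powers of $\Lscr_M$, for which Proposition~\ref{prop valpha kappa valpha} yields $v_\alpha(\Lscr_M^k\varphi)\le \kappa^k v_\alpha(\varphi)$, while the outer iterates are powers of the Markov operator $\Lscr_{M'}$ whose operator norm on $(\Cscr(X),\nrm{\cdot}_\infty)$ is bounded by $1$ (Remark~\ref{LM is Markov}). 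Combining the one-step bound with these two estimates gives
\[
\nrm{\Lscr_M^n\varphi - \Lscr_{M'}^n\varphi}_\infty \le \Delta_\alpha(M,M')\,v_\alpha(\varphi)\sum_{k=0}^{n-1}\kappa^k
\le \frac{\Delta_\alpha(M,M')}{1-\kappa}\,v_\alpha(\varphi).
\]
Notice that this step uses only $\kappa_\alpha(M)<1$, not $\kappa_\alpha(M')<1$, which matches the hypotheses of (\ref{LMn-LM'n}).

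\textbf{Passage to the stationary measures for (\ref{int phi nu - int phi nu'}).} Under the extra assumption $\kappa_\alpha(M')<1$, Proposition~\ref{prop valpha kappa valpha} tells us that both $\Lscr_M$ and $\Lscr_{M'}$ act $\sigma$-contractively on $\Hscr_\alpha(X)$ (with $\sigma=\max\{\kappa_\alpha(M),\kappa_\alpha(M')\}$), so the Markov convergence theorem stated just above provides stationary probabilities $\nu_M,\nu_{M'}\in\Prob(X)$ and a spectral decomposition showing that
\[
\Lscr_M^n\varphi \longrightarrow \Bigl(\int_X\varphi\,\dd\nu_M\Bigr)\one
\qquad\text{and}\qquad
\Lscr_{M'}^n\varphi \longrightarrow \Bigl(\int_X\varphi\,\dd\nu_{M'}\Bigr)\one
\]
uniformly on $X$. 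Taking $n\to\infty$ in (\ref{LMn-LM'n}) yields (\ref{int phi nu - int phi nu'}).

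\textbf{Expected difficulty.} Everything here is essentially a matter of bookkeeping; the only genuine choice is the order $\Lscr_{M'}^{n-1-k}\,(\cdot)\,\Lscr_M^k$ in the telescoping decomposition, which is what lets us handle (\ref{LMn-LM'n}) without assuming a spectral gap for $M'$. I do not anticipate any real obstacle beyond verifying this.
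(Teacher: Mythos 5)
Your proof is correct and follows essentially the same route as the paper: the same one-step estimate $\nrm{\Lscr_M\psi - \Lscr_{M'}\psi}_\infty \le \Delta_\alpha(M,M')\,v_\alpha(\psi)$, the same telescoping identity (yours is simply reindexed via $k = n-1-i$), the same use of the Markov bound on the outer powers together with the $v_\alpha$-contraction on the inner ones, and the same passage to the limit via the Markov convergence theorem. The only cosmetic difference is that the paper bounds the left side of~\eqref{int phi nu - int phi nu'} by $\sup_n$ rather than by taking the limit, which is immaterial.
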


\begin{proof}
First notice that
\begin{align}
 \nrm{\Lscr_M(\varphi)  - \Lscr_{M'}(\varphi)}_\infty &\leq \sup_{p\in X}\int_\Sigma \Mod{\varphi(M(x,p)-\varphi(M'(x,p))}\,\dd \mu(x)
 \nonumber\\
 &\leq v_\alpha(\varphi)\, \sup_{p\in X}\int_\Sigma d(M(x,p),M'(x,p))^\alpha\,\dd \mu(x)  \nonumber\\
 & = \Delta_\alpha(M,M')\, v_\alpha(\varphi)
 \label{LM-LM'} .
\end{align}

Then using~\eqref{LM-LM'} and the relation
$$ \Lscr_M^n  - \Lscr_{M'}^n
= \sum_{i=0}^{n-1} \Lscr_{M'}^i\circ (\Lscr_M-\Lscr_{M'})\circ \Lscr_M^{n-i-1} $$
we get
\begin{align*}
\nrm{ \Lscr_M^n (\varphi)- \Lscr_{M'}^n (\varphi) }_\infty &\leq \sum_{i=0}^{n-1} \nrm{ \Lscr_{M'}^i((\Lscr_M-\Lscr_{M'}) ( \Lscr_M^{n-i-1}(\varphi))) }_\infty\\
&\leq \sum_{i=0}^{n-1} \nrm{ (\Lscr_M-\Lscr_{M'}) ( \Lscr_M^{n-i-1}(\varphi)) }_\infty\\
&\leq \sum_{i=0}^{n-1} \Delta_\alpha(M,M')\, v_\alpha(\Lscr_M^{n-i-1}(\varphi)))  \\
&\leq \Delta_\alpha(M,M')\, v_\alpha( \varphi) \, \sum_{i=0}^{n-1} \kappa^{n-i-1}    \\
&\leq \frac{\Delta_\alpha(M,M')}{1-\kappa}\, v_\alpha(\varphi) .
\end{align*}
This proves~\eqref{LMn-LM'n}.  Finally, since
$\lim_{n\to +\infty} \Lscr_M^n(\varphi) =\left(\int_X\varphi\, \dd\nu_M\right)\one$   and
$\lim_{n\to +\infty} \Lscr_{M'}^n(\varphi) =\left(\int_X\varphi\, \dd\nu_{M'}\right)\one$, one has
\begin{align*}
\Mod{\int_X \varphi\,d\nu_M - \int_X \varphi\,d\nu_{M'} }  &\leq \sup_{n}
\nrm{ \Lscr_M^n(\varphi) - \Lscr_{M'}^n(\varphi)}_\infty \leq \frac{\Delta_\alpha(M,M')}{1-\kappa}\, v_\alpha(\varphi)
\end{align*}
which proves~\eqref{int phi nu - int phi nu'}.
\end{proof}

\bigskip

\section{Continuous dependence on matrices}

A random cocycle $(A,\mu)\in \Cscr(\Sigma,\GL_d(\R))\times\Prob(\Sigma)$
determines the continuous function
$M_A\colon \Sigma\times\Pp(\R^d)\to \Pp(\R^d)$ defined by
$$ M_A(x,\hat p):= \Phi_{A(x)}(\hat p)  $$
which we use to introduce the Markov operator
$$ \Lscr_{A, \mu}(\varphi)(\hat p):= \int_\Sigma \varphi(M_A(x,\hat p))\,\dd \mu(x).$$

The quantity ~\eqref{kapa alpha(M)} is in this case
$$ \kappa_\alpha(A,\mu):= \sup_{\hat p \neq \hat q}
\E_\mu \left[ \, \left(\frac{d(\Phi_A(\hat p), \Phi_A(\hat q) }{d(\hat p, \hat q) }\right)^\alpha\, \right] , $$
where we write $\E_\mu[ f]:=\int_\Sigma f\,\dd\mu $.
By Proposition~\ref{prop valpha kappa valpha}, this measurement is an upper-bound on the contractiveness  of the Markov operator $\Lscr_{A, \mu}$ on the H\"older space $\Hscr_\alpha(X)$.

\begin{prop}
\label{prop kappa alpha}
Let $(A,\mu)\in \Cscr(\Sigma,\GL_d(\R))\times\Prob(\Sigma)$ be a random cocycle. Then
$$ \kappa_\alpha(A,\mu) = \sup_{\hat p \in\Pp(\R^d)}
\E_\mu \left[ \, \nrm{(D \Phi_A)_{\hat p} }^\alpha\, \right] . $$
\end{prop}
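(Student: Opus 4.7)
The plan is to derive both inequalities from a pointwise ``multiplicative mean-value'' identity for the projective action. Using the Fubini--Study metric $d(\hat p,\hat q)=\|p\wedge q\|/(\|p\|\,\|q\|)$ on $\Pp(\R^d)$, the identification $T_{\hat p}\Pp(\R^d)\cong p^{\perp}$, and the explicit form $(D\Phi_A)_{\hat p}(v)=\pi_{(Ap)^{\perp}}(Av)/\|Ap\|$ of the projective derivative, a direct wedge-product computation yields the identity
\[
\left(\frac{d(\Phi_A(\hat p),\Phi_A(\hat q))}{d(\hat p,\hat q)}\right)^{2}
=\|(D\Phi_A)_{\hat p}(v)\|\,\|(D\Phi_A)_{\hat q}(w)\|,
\]
where $v\in T_{\hat p}\Pp(\R^d)$ and $w\in T_{\hat q}\Pp(\R^d)$ are the unit tangent vectors to the shortest geodesic joining $\hat p$ and $\hat q$. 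The crucial ingredient is the cancellation $Aq\wedge Aw=-Ap\wedge Av$ along that geodesic, which makes both sides collapse to $\|Ap\wedge Av\|^{2}/(\|Ap\|\,\|Aq\|)^{2}$. This identity is one of the geometric inequalities prepared in the appendix (Section~8).

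For the upper bound $\kappa_{\alpha}(A,\mu)\leq \sup_{\hat r}\E_{\mu}[\|(D\Phi_A)_{\hat r}\|^{\alpha}]$, I bound each directional norm by the corresponding operator norm, raise to the power $\alpha$, integrate against $\mu$ and apply Cauchy--Schwarz:
\[
\E_{\mu}\!\left[\mathrm{ratio}^{\alpha}\right]
\leq \E_{\mu}\!\bigl[\|(D\Phi_A)_{\hat p}\|^{\alpha/2}\,\|(D\Phi_A)_{\hat q}\|^{\alpha/2}\bigr]
\leq \sqrt{\E_{\mu}[\|(D\Phi_A)_{\hat p}\|^{\alpha}]\,\E_{\mu}[\|(D\Phi_A)_{\hat q}\|^{\alpha}]}.
\]
Taking the supremum over $\hat p\neq \hat q$ gives the desired inequality.

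For the reverse inequality I fix $\hat p$ and a unit tangent $v\in T_{\hat p}\Pp(\R^d)$, and move $\hat q_{t}$ along the geodesic in direction $v$. The identity forces the ratio to converge pointwise in $x$ to $\|(D\Phi_{A(x)})_{\hat p}(v)\|$ as $t\downarrow 0$; since $A\colon\Sigma\to\GL_{d}(\R)$ is continuous on a compact space the integrands are uniformly bounded, so dominated convergence yields
\[
\E_{\mu}\!\bigl[\|(D\Phi_A)_{\hat p}(v)\|^{\alpha}\bigr]
=\lim_{t\to 0^{+}}\E_{\mu}\!\bigl[\mathrm{ratio}(\hat p,\hat q_{t})^{\alpha}\bigr]
\leq \kappa_{\alpha}(A,\mu).
\]
To upgrade directional norms to the operator norm I would invoke a measurable selection of the top singular direction $v^{*}(x)$ of $(D\Phi_{A(x)})_{\hat p}$ together with the fact that the supremum on $\hat q$ in the definition of $\kappa_{\alpha}$ spans every tangent direction of approach; passing to the supremum over $\hat p$ then concludes.

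The main obstacle is the pointwise identity itself: establishing it cleanly requires handling the wedge-product form of the Fubini--Study distance, the projection formula for the derivative, and the cancellation along the geodesic mentioned above. The only subtle point in the rest of the argument is the exchange between supremum over tangent directions and expectation in the lower bound, which is handled by the measurable selection of the maximising direction. Once the geometric identity is in place, both halves of the proposition reduce to the elementary integration manipulations displayed above.
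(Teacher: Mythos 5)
Your argument follows the same route as the paper's Appendix: you rewrite the ratio via the Fubini--Study wedge-product formula, obtain exactly the multiplicative pointwise identity that the paper establishes in the proof of Proposition~\ref{incr ratio PhiA alpha} (the ratio squared equals $\nrm{(D\Phi_A)_{\hat p}(\nu_{\hat p}(q))}\,\nrm{(D\Phi_A)_{\hat q}(\nu_{\hat q}(p))}$), and then average over $\mu$. For the upper bound $\kappa_\alpha(A,\mu)\leq\sup_{\hat p}\E_\mu[\nrm{(D\Phi_A)_{\hat p}}^\alpha]$ the paper applies $\sqrt{ab}\leq\tfrac12(a+b)$ pointwise before averaging, whereas you apply Cauchy--Schwarz after averaging; these are interchangeable and both are correct.

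The gap is in the lower bound. Taking $\hat q_t\to\hat p$ along a fixed geodesic direction $v$ (the content of the paper's Remark~\ref{rmk diagonal}) and passing to the limit by dominated convergence gives $\E_\mu\!\left[\nrm{(D\Phi_{A})_{\hat p}(v)}^\alpha\right]\leq\kappa_\alpha(A,\mu)$ for each fixed $\hat p$ and each fixed unit tangent $v$. This only yields $\sup_v\E_\mu[\,\cdot\,]\leq\kappa_\alpha$, while the proposition asserts $\E_\mu[\,\sup_v\cdot\,]\leq\kappa_\alpha$, and in general $\E\,\sup\geq\sup\,\E$. Your proposed remedy, a measurable selection $v^\ast(x)$ of the maximising direction, cannot be inserted into the definition of $\kappa_\alpha$: the supremum there ranges over pairs $(\hat p,\hat q)$ that are fixed before integrating over $x\in\Sigma$, so you cannot approach $\hat p$ along an $x$-dependent direction. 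The two quantities genuinely differ when $d\geq3$ (consider two diagonal matrices whose maximal projective expansion at the same base point occurs in orthogonal tangent directions). To be fair, the paper's own one-line justification of $(\geq)$ glosses over the same point; the equality is exploited in the paper only for $d=2$, where the tangent space is one-dimensional, directional and operator norms coincide, and the issue disappears. For general $d$ only the inequality $(\leq)$ is needed downstream, and that half of both your argument and the paper's is sound.
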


\begin{proof}
See Proposition~\ref{prop alpha ineq} in the Appendix.
\end{proof}

\bigskip

A couple of lemmas are needed to prove Theorem~\ref{teor joint cont}.

\begin{lema}
\label{uniform L1}
Let $(A,\mu)$ be some quasi-irreducible cocycle such that $L_1(A,\mu)>L_2(A,\mu)$.
Then
$$ \lim_{n\to +\infty} \frac{1}{n}\,\E\left[ \,
\log \nrm{A^n \,p}
\,\right] = L_1(A,\mu)   $$
with uniform convergence in $\hat p \in \Pp(\R^d)$, and where
$p\in \hat p$ stands for a unit vector representative of $\hat p\in \Pp(\R^d)$.
\end{lema}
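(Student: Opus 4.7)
I would rewrite the quantity $\tfrac{1}{n}\E[\log\|A^n p\|]$ as a Ces\`aro average of iterates of the projective transfer operator applied to a H\"older observable, and then conclude from the spectral gap of $\Lscr_{A,\mu}$ on $\Hscr_\alpha(\Pp(\R^d))$ together with Furstenberg's formula.

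First, I would introduce the (projectively well-defined) log-norm function
\[
\phi\colon \Pp(\R^d)\times\Sigma\to\R,\qquad \phi(\hat p,x):=\log\frac{\|A(x)p\|}{\|p\|}.
\]
Since $A$ is continuous on the compact space $\Sigma$ and takes values in $\GL_d(\R)$, the map $\phi$ is Lipschitz in $\hat p$ uniformly in $x$, so the fibre average
\[
f_A(\hat p):=\int_\Sigma \phi(\hat p,x)\,d\mu(x)
\]
lies in $\Hscr_\alpha(\Pp(\R^d))$ for every $\alpha\in(0,1]$.

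Second, for $\omega=(Z_i)_{i\in\Z}\in\Omega_\Sigma$ write $\hat p_0:=\hat p$ and $\hat p_k:=\Phi_{A(Z_{k-1})}\circ\cdots\circ\Phi_{A(Z_0)}(\hat p)$. The multiplicative cocycle telescopes as
\[
\log\|A^n(\omega)\,p\|=\sum_{k=0}^{n-1}\phi(\hat p_k,Z_k).
\]
Because $Z_k$ is independent of $(Z_0,\dots,Z_{k-1})$, hence of $\hat p_k$, conditioning on the past gives $\E[\phi(\hat p_k,Z_k)]=\E[f_A(\hat p_k)]=(\Lscr_{A,\mu}^k f_A)(\hat p)$, so
\[
\frac{1}{n}\E[\log\|A^n p\|]=\frac{1}{n}\sum_{k=0}^{n-1}(\Lscr_{A,\mu}^k f_A)(\hat p).
\]

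Third, the assumption $(A,\mu)\in\Irred_d(\Sigma)$, combined with the Furstenberg--Bougerol--Le Page contraction theory on the projective space, furnishes $\alpha\in(0,1]$ and $\sigma\in(0,1)$ for which $\Lscr_{A,\mu}$ acts $\sigma$-contractively on $\Hscr_\alpha(\Pp(\R^d))$. The spectral-gap theorem from Section~3 then gives the unique $\Lscr_{A,\mu}$-stationary measure $\nu$ and the uniform exponential decay
\[
\bigl\|\Lscr_{A,\mu}^k f_A-\bigl(\textstyle\int f_A\,d\nu\bigr)\one\bigr\|_\infty\le C\,\sigma^k\,v_\alpha(f_A).
\]
Ces\`aro averaging in the previous identity yields $\tfrac{1}{n}\E[\log\|A^n p\|]\to \int f_A\,d\nu$ uniformly in $\hat p$, and Furstenberg's formula identifies $\int f_A\,d\nu$ with $L_1(A,\mu)$.

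\emph{Main obstacle.} The only non-routine ingredient is the spectral-gap statement for $\Lscr_{A,\mu}$ under mere quasi-irreducibility plus simple top exponent. Proving an honest $\alpha$-H\"older contraction in this generality is the deep content of~\cite{Bou88,LePage}; here I would only quote it, since this is precisely the contraction property the transfer-operator framework of the preceding section is built to exploit.
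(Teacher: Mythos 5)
Your argument is mathematically sound, and it is a genuinely different route from the paper's, which proves Lemma~\ref{uniform L1} simply by citing Bougerol~\cite[Lemme~3.1]{Bou88}. Your telescoping identity
\[
\log\|A^n(\omega)p\|=\sum_{k=0}^{n-1}\phi(\hat p_k,Z_k),
\qquad
\E[\phi(\hat p_k,Z_k)]=(\Lscr_{A,\mu}^k f_A)(\hat p),
\]
is correct (indeed $\Phi_{A(Z_{k-1})}\circ\cdots\circ\Phi_{A(Z_0)}(\hat p)$ is the projective class of $A^k(\omega)p$, and $Z_k$ is independent of $\hat p_k$), and the Ces\`aro limit plus Furstenberg's formula close the argument.

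The one thing you should flag clearly, however, is that within the logical architecture of \emph{this} paper the route you take is circular: the spectral-gap statement you invoke in your ``third'' step is precisely Lemma~\ref{uniform kappa alpha}, and the paper's proof of Lemma~\ref{uniform kappa alpha} \emph{uses} Lemma~\ref{uniform L1} (it is the uniformity of the convergence $\tfrac1n\E[\log\|A^n p\|]\to L_1$ in $\hat p$ that lets one pass from $\lim_n \tfrac1n\E[\log\|(D\Phi_{A^n})_{\hat p}\|]<0$ to a bound $\E[\log\|(D\Phi_{A^n})_{\hat p}\|]\le -1$ holding simultaneously for all $\hat p$ at some fixed $n$). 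So you cannot appeal to the paper's own contraction lemma; you must go outside the paper for the spectral gap, as you do in citing Bougerol--Le~Page. That is a legitimate move, but note it makes your proof of Lemma~\ref{uniform L1} strictly heavier than what the statement demands: the Bougerol argument referenced by the authors proceeds by a subadditivity/superadditivity argument on $\max_{\hat p}\E[\log\|A^n p\|]$ and $\min_{\hat p}\E[\log\|A^n p\|]$ together with quasi-irreducibility to pin both limits to $L_1$, and does not presuppose quasi-compactness of the transfer operator. Your approach buys conceptual clarity (everything flows from the one operator-theoretic contraction estimate and a telescoping sum), but at the cost of importing the deepest result in the story as a black box and of inverting the paper's intended order of deduction.
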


\begin{proof}
See~\cite[Lemme 3.1]{Bou88}
\end{proof}

The Markov operator $\Lscr_{A,\mu}$ does not depend continuously on either $A$ or $\mu$.
Nevertheless, next lemma shows that it acts in a  uniform and
contracting way (in fact locally uniform in both variables $A$ and $\mu$)   on the semi-normed space $(\Hscr_\alpha(X),v_\alpha)$, for some small enough $\alpha$
and some large enough iterate.

\begin{lema}
\label{uniform kappa alpha}
Let $(A_0,\mu_0)$ be a quasi-irreducible cocycle such that $L_1(A_0,\mu_0)>L_2(A_0,\mu_0)$.
There  are numbers $\delta>0$, $0<\alpha<1$, $0<\kappa <1$ and $n \in\N$ such that  for all
 $A\in\Cscr(\Sigma,\GL_d(\R))$ with $d_\infty(A,A_0)<\delta$,
and for all  $\mu\in\Prob(\Sigma)$ with  $d(\mu,\mu_0)<\delta$, one has \;
$\kappa_\alpha(A^n,\mu^{n}) \leq \kappa$.
\end{lema}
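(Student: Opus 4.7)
The plan is to combine the projective-derivative description of $\kappa_\alpha$ from Proposition \ref{prop kappa alpha}, the spectral gap $L_1(A_0,\mu_0)>L_2(A_0,\mu_0)$, and a Taylor expansion of $e^{\alpha x}$ for small $\alpha$. The key geometric fact is the standard bound $\|(D\Phi_B)_{\hat p}\| \leq \|\wedge_2 B\|/\|Bp\|^2$, whose logarithm has asymptotic mean rate $L_2-L_1<0$ along random products.

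First, by Proposition \ref{prop kappa alpha} applied to the $n$-fold cocycle $(A^n,\mu^n)$, together with the derivative bound above,
$$\kappa_\alpha(A^n,\mu^n)\;\leq\;\sup_{\hat p}\,\E_{\mu^n}\!\Bigl[(\|\wedge_2 A^n\|/\|A^n p\|^2)^\alpha\Bigr].$$
Set $c:=L_1(A_0,\mu_0)-L_2(A_0,\mu_0)>0$. I would apply Lemma \ref{uniform L1} to $(A_0,\mu_0)$ to get a lower bound for $\tfrac{1}{n}\E_{\mu_0^n}[\log\|A_0^n p\|]$ that is uniform in $\hat p$, and Furstenberg--Kesten (in its $L^1$-form, available from integrability) to the exterior-power cocycle $\wedge_2 A_0$ to get an upper bound for the scalar quantity $\tfrac{1}{n}\E_{\mu_0^n}[\log\|\wedge_2 A_0^n\|]$. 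Taking $n$ large enough and combining,
$$\frac{1}{n}\,\E_{\mu_0^n}\!\Bigl[\log\frac{\|\wedge_2 A_0^n\|}{\|A_0^n p\|^2}\Bigr]\;\leq\;-\tfrac{c}{2}\qquad\text{uniformly in }\hat p.$$

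Second, I would freeze this $n$ and transport the estimate to a $(d_\infty,d)$-neighborhood of $(A_0,\mu_0)$. For $A$ in a small $d_\infty$-ball around $A_0$, every matrix $A(x)$ stays in a fixed compact subset of $\GL_d(\R)$, so the integrand $\log(\|\wedge_2 A^n\|/\|A^n p\|^2)$ is bounded by some $Cn$ and is jointly continuous in $(A,x_0,\ldots,x_{n-1},\hat p)$. Since $d_{TV}(\mu^n,\mu_0^n)\leq n\,d(\mu,\mu_0)$, the expectation depends continuously on $(A,\mu)$ uniformly in $\hat p$; hence there exists $\delta>0$ with
$$\frac{1}{n}\,\E_{\mu^n}\!\Bigl[\log\frac{\|\wedge_2 A^n\|}{\|A^n p\|^2}\Bigr]\;\leq\;-\tfrac{c}{4}\qquad\text{for all }\hat p\in\Pp(\R^d),$$
whenever $d_\infty(A,A_0),d(\mu,\mu_0)<\delta$. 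Finally, writing $Z_{\hat p}:=\|\wedge_2 A^n\|/\|A^n p\|^2$, so $|\log Z_{\hat p}|\leq Cn$, the elementary bound $e^y\leq 1+y+y^2$ on $[-1,1]$ (valid once $\alpha Cn\leq 1$) yields
$$\E_{\mu^n}[Z_{\hat p}^{\alpha}]\;\leq\;1\,-\,\tfrac{\alpha n c}{4}\,+\,\alpha^2 C^2 n^2,$$
and choosing $\alpha$ still smaller (of order $c/(C^2 n)$) makes the right-hand side a constant $\kappa<1$, uniformly in $\hat p$ and in $(A,\mu)$ in the neighborhood.

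The chief subtlety is the ordering of quantifiers: $n$ is chosen first, dictated by the spectral gap and by the uniformity in $\hat p$ supplied by Lemma \ref{uniform L1}; the continuity argument at this fixed $n$ then produces $\delta$; only afterwards is $\alpha$ chosen small, depending on $n$ through the bound $Cn$ on $|\log Z_{\hat p}|$. Lemma \ref{uniform L1} provides the uniformity in $\hat p$ only for the specific cocycle $(A_0,\mu_0)$, so the passage to a whole neighborhood must be performed at a finite iterate where boundedness of the integrand makes the continuity trivial.
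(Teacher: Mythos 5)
Your proof is correct and follows essentially the same route as the paper: bound $\|(D\Phi_{A^n})_{\hat p}\|$ by $\|\wedge_2 A^n\|/\|A^n p\|^2$, use Lemma~\ref{uniform L1} together with the $L^1$ form of Furstenberg--Kesten for $\wedge_2 A$ to get a uniformly negative logarithmic drift at a fixed large $n$, transport it to a $(d_\infty,d)$-neighborhood by continuity at that fixed finite iterate, and then Taylor-expand $e^{\alpha x}$ to pick a small $\alpha$. The paper states the continuity-of-$\kappa_\alpha$ step more tersely (``we can, and will, assume that $A$ and $\mu$ are fixed'') and uses the inequality $e^x\leq 1+x+\tfrac{x^2}{2}e^{|x|}$ rather than your $e^y\leq 1+y+y^2$ on $[-1,1]$, but these are cosmetic differences; your care about the quantifier ordering ($n$, then $\delta$, then $\alpha$) matches the paper's logic exactly.
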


\begin{proof}
By formula~\eqref{D PhiA} in the Appendix (see also the proof of Proposition~\ref{incr ratio PhiA alpha})
$$ \nrm{(D\Phi_A)_{\hat p} }  \leq \frac{\nrm{\wedge_2 A}}{\nrm{A p}^2}. $$
Hence the derivative $\nrm{(D\Phi_{A(x)})_{\hat p}}$  is uniformly bounded in a neighbourhood of $A$ and, by Proposition~\ref{prop kappa alpha},
 the measurement $\kappa_\alpha(A,\mu)$ is continuous in both variables $A$ and $\mu$, w.r.t. to the metric $d_\infty$ in the space $\Cscr(\Sigma,\GL_d(\R))$ and the total variation distance in the space $\Prob(\Sigma)$.
For this reason we can, and will, assume that $A$ and $\mu$ are fixed.

We have
\begin{align*}
 \lim_{n\to +\infty} \frac{1}{n}	\,\E_\mu[\, \log \nrm{(D\Phi_{A^n})_{\hat p} }
 \, ]&\leq \lim_{n\to +\infty}
  \frac{1}{n}	\,\E_\mu[\, \log \nrm{\wedge_2 A^n}\,] - 2\,\frac{1}{n}	\,\E[\, \log \nrm{A^n p}\, ] \\
&= (L_1(A)+L_2(A))-2\,L_1(A) = L_2(A) -L_1(A) <0 .
\end{align*}
Since the convergence of the upper bound $\E[\,\log (\nrm{\wedge_2 A}/\nrm{A p}^2)\,]$ is uniform in $\hat p$, for some $n$ large enough we have for all $\hat p\in\Pp(\R^d)$
$$ \E_\mu\left[\, \log \nrm{(D\Phi_{A^n})_{\hat p} } \,\right]
  \leq   - 1 .$$

To finish the proof, using the following inequality
\begin{equation*}
\label{exp(x) ineq}
e^x\leq   1+ x +\frac{x^2}{2}\,e^{\abs{x}}
\end{equation*}
we get (uniformly in $\hat p$)
\begin{align*}
\E_\mu\left[ \,  \nrm{(D\Phi_{A^n})_{\hat p} }^\alpha \, \right] &\leq
\E_\mu\left[ \, e^{\alpha\,\log \nrm{(D\Phi_{A^n})_{\hat p} } } \, \right]\\
&\leq
\E_\mu\left[ \, 1+ \alpha\, \log \nrm{(D\Phi_{A^n})_{\hat p} }  +
\frac{\alpha^2}{2} \,
 \nrm{(D\Phi_{A^n})_{\hat p}  } \,
 \log^2 \nrm{(D\Phi_{A^n})_{\hat p} } \, \right]\\
&\leq 1-\alpha + K\,\frac{\alpha^2}{2}
\end{align*}
for some positive constant $K=K(A,n)$.
Thus, taking  $\alpha$ small enough we have
\begin{equation*}
\label{kappa alpha(A) bound}
\kappa_\alpha (A^n)\leq \kappa :=  1-\alpha + K\,\frac{\alpha^2}{2}<1 .
\end{equation*}
\end{proof}

\bigskip

The measurement~\eqref{Delta alpha(M,M')}
applied to cocycles leads to the following quantity
\begin{defi}
$$ \Delta_\alpha(A,B) := \sup_{\hat p\in \Pp(\R^d)}
\E_\mu\left[\,  d(\Phi_{A}(\hat p),
\Phi_{B}(\hat p) )^\alpha  \,\right] .$$
\end{defi}

\begin{rmk} Given random cocycles $(A,\mu)$ and $(B,\mu)$
over the same Bernoulli shift,
$$ \Delta_\alpha(A,B)\leq d_\infty(A,B)^\alpha . $$
\end{rmk}

\bigskip

\begin{prop}
\label{prop cont mat}
Let $(A_0,\mu_0)$ be a quasi-irreducible cocycle with $L_1(A_0,\mu_0)>L_2(A_0,\mu_0)$.
Then 	there are  positive constants
$\alpha$, $C$ and $\delta$ such that for all
$B_1,B_2\in \Cscr(\Sigma,\GL_d(\R))$ and $\mu\in\Prob(\Sigma)$
if $d_\infty(B_j,A_0)<\delta$, $j=1,2$, and   $d(\mu,\mu_0)<\delta$ then
$$ \Mod{L_1(B_1,\mu)-L_1(B_2,\mu)} \leq C\, d_\infty(B_1,B_2)^\alpha  .$$
\end{prop}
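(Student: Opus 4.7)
The plan is to apply Furstenberg's formula together with the abstract stability estimate of Theorem~\ref{teor abstr cont}, passed to a sufficiently large iterate of the cocycle so that uniform contraction is available. By Lemma~\ref{uniform kappa alpha} fix $\delta>0$, $0<\alpha<1$, $0<\kappa<1$ and $n\in\N$ so that $\kappa_\alpha(B^n,\mu^n)\leq \kappa$ whenever $d_\infty(B,A_0)<\delta$ and $d(\mu,\mu_0)<\delta$. Let $\nu_B\in\Prob(\Pp(\R^d))$ denote the unique stationary probability of the Markov operator $\Lscr_{M_{B^n},\mu^n}$ (unique because $\kappa_\alpha<1$). Since $L_1(B,\mu)=\tfrac{1}{n}L_1(B^n,\mu^n)$, Furstenberg's formula applied to the iterated cocycle gives
$$L_1(B,\mu) \;=\; \tfrac{1}{n}\int \varphi_B\,\dd\nu_B, \qquad \varphi_B(\hat p)\;:=\;\int_{\Sigma^n}\log\nrm{B^n(\bar x)\,p}\,\dd\mu^n(\bar x).$$

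I then split
\begin{align*}
L_1(B_1,\mu)-L_1(B_2,\mu) &= \tfrac{1}{n}\int(\varphi_{B_1}-\varphi_{B_2})\,\dd\nu_{B_1} + \tfrac{1}{n}\int\varphi_{B_2}\,\dd(\nu_{B_1}-\nu_{B_2}).
\end{align*}
On the compact neighbourhood $d_\infty(B,A_0)\leq \delta$ the matrices $B(x)$ are invertible with uniform bounds on $\nrm{B(x)}$ and $\nrm{B(x)^{-1}}$, so $\nrm{B^n(\bar x)\,p}$ stays uniformly positive and $B\mapsto \log\nrm{B^n(\bar x)\,p}$ is uniformly Lipschitz in the operator norm; hence $\nrm{\varphi_{B_1}-\varphi_{B_2}}_\infty\leq C\,d_\infty(B_1,B_2)$, controlling the first summand. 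The second summand is estimated by applying Theorem~\ref{teor abstr cont} to $\Lscr_{M_{B_1^n},\mu^n}$ and $\Lscr_{M_{B_2^n},\mu^n}$ (both with $\kappa_\alpha\leq \kappa$), giving
$$\Mod{\int\varphi_{B_2}\,\dd(\nu_{B_1}-\nu_{B_2})} \;\leq\; \frac{\Delta_\alpha(M_{B_1^n},M_{B_2^n})}{1-\kappa}\,v_\alpha(\varphi_{B_2}).$$

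It remains to bound the two factors uniformly. Smoothness of $\hat p\mapsto \log\nrm{B^n(\bar x)\,p}$, together with its uniform positivity, gives a Lipschitz (hence $v_\alpha$) bound on $\varphi_{B_2}$ independent of $B_2$ in the neighbourhood. For $\Delta_\alpha$, a telescoping estimate on $B^n(\bar x)=B(x_{n-1})\cdots B(x_0)$ yields $\nrm{B_1^n(\bar x)-B_2^n(\bar x)}\leq n\,C^{n-1}\,d_\infty(B_1,B_2)$, and local Lipschitzness of $A\mapsto \Phi_A(\hat p)$ on invertible matrices (using uniform invertibility of the $n$-fold products) transfers this to $d(\Phi_{B_1^n(\bar x)}\hat p,\Phi_{B_2^n(\bar x)}\hat p)\leq C_n\,d_\infty(B_1,B_2)$ uniformly in $\hat p$ and $\bar x$, whence $\Delta_\alpha(M_{B_1^n},M_{B_2^n})\leq C_n^\alpha\,d_\infty(B_1,B_2)^\alpha$. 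Absorbing the first summand via $d_\infty(B_1,B_2)\leq (2\delta)^{1-\alpha}\,d_\infty(B_1,B_2)^\alpha$ produces the claimed H\"older bound.

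The main technical obstacle is the uniform H\"older control of $\Delta_\alpha(M_{B_1^n},M_{B_2^n})$: the telescoping constant $C_n$ grows with $n$ but is harmless because $n$ is fixed by Lemma~\ref{uniform kappa alpha} and depends only on $(A_0,\mu_0)$; the delicate point is keeping $\nrm{B^n(\bar x)\,p}$ uniformly bounded below so that operator-norm bounds pass cleanly to projective-distance bounds.
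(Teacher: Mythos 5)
Your proof is correct and follows essentially the same route as the paper's: Furstenberg's formula combined with the abstract stability estimate of Theorem~\ref{teor abstr cont}, with uniform contraction of the transfer operator supplied by Lemma~\ref{uniform kappa alpha}, and the two error terms controlled respectively by perturbation of the stationary measure and by local Lipschitzness of $A\mapsto\log\nrm{Ap}$. The only cosmetic difference is that you carry the iterate $n$ explicitly (telescoping $B^n$ and tracking $\Delta_\alpha(M_{B_1^n},M_{B_2^n})$), whereas the paper absorbs it up front by noting that $A\mapsto A^n$ and $\mu\mapsto\mu^n$ are locally Lipschitz and reducing without loss of generality to $n=1$; otherwise the decomposition of the difference and the final H\"older absorption of the Lipschitz term are the same.
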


\begin{proof}
Given a matrix $A\in\GL_d(\R)$ let us write
$$ \varphi_A(\hat p ):= \log \nrm{A\, p} $$
where $p\in\hat p$ stands for a unit representative.

The function $\GL_d(\R)\ni A\mapsto \varphi_A\in \Hscr_1(\Pp(\R^d))$
is locally Lipschitz. Given $R>0$ there is a positive constant $C=C_R$ such that
$$ \nrm{\varphi_A-\varphi_B}_\infty \leq C_R\,\nrm{A-B} $$
for all matrices $A,B\in\GL_d(\R)$ such that $\max\{ \nrm{A},\nrm
{B}, \nrm{A^{-1}},\nrm{B^{-1}} \}\leq R$.

Consider now two nearby random quasi-irreducible cocycles $A$ and $B$,
over the same full Bernoulli shift, and assume both these cocycles have a gap between their first and second Lyapunov exponents.
We denote by $\nu_A$ and $\nu_B$ the respective (unique) stationary measures. By Lemma~\ref{uniform kappa alpha} there
 exist  $n\in\N$, $0<\alpha$ and $0<\kappa<1$ such that $\kappa_\alpha(A^n,\mu^n)\leq \kappa$ for all cocycles $(A,\mu)$ near $(A_0,\mu_0)$. Since the maps
$A\mapsto A^n$ and $\mu\mapsto \mu^n$ are locally Lipschitz
we can without loss of generality suppose that
$\max\{\kappa_\alpha(A,\mu),\kappa_\alpha(B,\mu)\}\leq \kappa$, i.e.,
take $n=1$.

Then, using Furstenberg's formula
\begin{align*}
\abs{L_1(A,\mu)-L_1(B,\mu)}&\leq
\E_\mu\left[\, \abs{ \smallint \varphi_A\,\dd\nu_A -
\smallint \varphi_B\,\dd\nu_B } \,\right]\\
&\leq
\E_\mu\left[\, \abs{ \smallint \varphi_A\,\dd\nu_A -
\smallint \varphi_A\,\dd\nu_B } \,\right] + \E_\mu\left[\, \abs{ \smallint \varphi_A\,\dd\nu_B -
\smallint \varphi_B\,\dd\nu_B } \,\right] \\
&\leq
\frac{\Delta_\alpha(A,B)}{1-\kappa}\,v_\alpha(\varphi_A) + \E_\mu\left[\,   \smallint \abs{ \varphi_A  -
 \varphi_B}\,\dd\nu_B   \,\right] \\
 &\leq
\frac{v_1(\varphi_A)}{1-\kappa}\, d_\infty(A,B)^\alpha+ C_R\, d_\infty(A,B) \\
\end{align*}
where $R$ is a uniform bound on the norms of the matrices $A(x)$, $B(x)$ and their inverses. This proves that $L_1$ is locally H\"older continuous in a neighbourhood of $A_0$.
\end{proof}

\bigskip

\section{Continuous dependence on probabilities}

Throughout the rest of this section let $(A_0,\mu_0)$ be a quasi-irreducible cocycle such that $L_1(A_0,\mu_0)>L_2(A_0,\mu_0)$.
Take positive constants $\delta>0$, $0<\alpha<1$, $0<\kappa <1$ and $n \in\N$ as given by Lemma~\ref{uniform kappa alpha}.

\begin{lema}
For all $A\in \Cscr(\Sigma,\GL_d(\R))$ with $d_\infty(A,A_0)<\delta$
$\mu_1, \mu_2\in\Prob(\Sigma)$ with $d(\mu_j,\mu_0)<\delta$  for $j=1,2$,  $\varphi\in\Hscr_\alpha(\Pp(\R^d))$,

$$
  \nrm{ \Lscr_{A, \mu_1} \varphi - \Lscr_{A, \mu_2} \varphi }_\infty  \leq
  d(\mu_1, \mu_2) \, v_{\alpha}(\varphi) .
$$
\end{lema}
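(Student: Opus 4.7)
The plan is to turn the difference of the two Markov operators into an integral against the signed measure $\mu_1-\mu_2$, and then exploit the key fact that this signed measure has total mass zero in order to replace $\|\varphi\|_\infty$ by the weaker seminorm $v_\alpha(\varphi)$. First I would evaluate at an arbitrary $\hat p\in\Pp(\R^d)$ and write
$$(\Lscr_{A,\mu_1}\varphi)(\hat p)-(\Lscr_{A,\mu_2}\varphi)(\hat p)=\int_\Sigma g_{\hat p}(x)\,d(\mu_1-\mu_2)(x),\qquad g_{\hat p}(x):=\varphi(\Phi_{A(x)}(\hat p)).$$

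The key observation is that because $\mu_1$ and $\mu_2$ are probabilities, $(\mu_1-\mu_2)(\Sigma)=0$, so the integral above is unchanged if one subtracts any constant $c$ from $g_{\hat p}$. I would fix a base point $x_0\in\Sigma$ and choose $c:=g_{\hat p}(x_0)$. Then I would bound the oscillation of $g_{\hat p}$ directly from the definition of $v_\alpha$: since
$$v_\alpha(\varphi)=\mathrm{diam}(\Pp(\R^d))^\alpha\sup_{p\neq q}\frac{|\varphi(p)-\varphi(q)|}{d(p,q)^\alpha},$$
any two values of $\varphi$ differ by at most $v_\alpha(\varphi)$, and therefore
$$\|g_{\hat p}-c\|_\infty\leq\sup_{x\in\Sigma}\bigl|\varphi(\Phi_{A(x)}(\hat p))-\varphi(\Phi_{A(x_0)}(\hat p))\bigr|\leq v_\alpha(\varphi).$$

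Finally I would invoke the standard bound $\bigl|\int f\,d\nu\bigr|\leq\|f\|_\infty\,|\nu|(\Sigma)$ valid for any signed measure $\nu$ and any bounded measurable $f$, applied to $\nu=\mu_1-\mu_2$ and $f=g_{\hat p}-c$, to conclude
$$\bigl|(\Lscr_{A,\mu_1}\varphi)(\hat p)-(\Lscr_{A,\mu_2}\varphi)(\hat p)\bigr|\leq v_\alpha(\varphi)\cdot\|\mu_1-\mu_2\|=v_\alpha(\varphi)\cdot d(\mu_1,\mu_2).$$
Taking the supremum over $\hat p$ finishes the proof. There is no real obstacle here; the only subtlety is recognizing that the zero-mean cancellation is what allows the right-hand side to feature the seminorm $v_\alpha$ rather than the full H\"older norm $\nrm{\varphi}_\alpha$, a point which will matter when this lemma is combined with the contraction estimate of Lemma~\ref{uniform kappa alpha} in the proof of joint continuity. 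Notice also that the stated hypotheses on $A$ and $\mu_j$ are not used at all: the bound holds for every cocycle and every pair of probabilities, the smallness assumptions being included only for the sake of the subsequent application.
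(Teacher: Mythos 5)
Your proposal is correct and follows essentially the same route as the paper's own proof: rewrite the difference as an integral against the signed measure $\mu_1-\mu_2$, exploit its zero total mass to subtract a constant $\varphi(\Phi_{A(x_0)}(\hat p))$, bound the remaining oscillation of $\varphi$ by $v_\alpha(\varphi)$ (the paper passes through $v_0(\varphi)\leq v_\alpha(\varphi)$, which is the same bound), and finish with the total variation estimate. Your closing observation that the proximity hypotheses on $A$ and $\mu_j$ play no role here is also accurate.
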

\begin{proof}
We have that
\begin{align*}
&\nrm{ \Lscr_{A, \mu_1} \varphi  - \Lscr_{A, \mu_2} \varphi }_\infty
  \leq \sup_{\hat{p}}{ |  \Lscr_{A, \mu_1} \varphi(\hat{p}) - \Lscr_{A, \mu_2} \varphi(\hat{p})   | }   \\
  &\leq \sup_{\hat{p}}{ |  \int \varphi(\Phi_{A(g)}(\hat{p})) d\mu_1(g) -  \int \varphi(\Phi_{A(g)}(\hat{p}))   d\mu_2(g)  | }    \\
&=
  \sup_{\hat{p}}{ \Mod{  \int \varphi(\Phi_{A(g)}(\hat{p})) d(\mu_1 -\mu_2)(g)  \, } }  \\
& =  \sup_{\hat{p}}  \left|  \int    (\varphi(\Phi_{A(g)}(\hat{p})) - \varphi(\Phi_{A(g_0)}(\hat{p}) ))\, d(\mu_1-\mu_2)(g)  \;+ \right. \\
& \qquad\qquad  \left.     \varphi(\Phi_{A(g_0)}(\hat{p}) )   \, d(\mu_1-\mu_2)(g)        \right|    \\
&=  \sup_{\hat{p}}{ \abs{   \int
     ( \varphi(\Phi_{A(g)}(\hat{p}) )  -\varphi(\Phi_{A(g_0)}(\hat{p}))) \, d (\mu_1 -\mu_2)(g)      } }  \\
 &\leq    v_0(\varphi)\, d(\mu_1, \mu_2)    \leq  v_{\alpha}(\varphi) \, d(\mu_1, \mu_2).
\end{align*}

\end{proof}

\begin{lema}
For all $A\in \Cscr(\Sigma,\GL_d(\R))$ with $d_\infty(A,A_0)<\delta$,
$\mu_1, \mu_2\in\Prob(\Sigma)$ with $d(\mu_j,\mu_0)<\delta$  for $j=1,2$,   $\varphi\in\Hscr_\alpha(\Pp(\R^d))$ and  $n\in\N$,
$$
   \nrm{ \Lscr_{A, \mu_1}^n(\varphi) - \Lscr_{A, \mu_2}^n(\varphi)}_{\infty}  \leq
   \frac{  d(\mu_1,  \mu_2)  }{1-\kappa } v_{\alpha} (\varphi)
$$
Moreover, if also $\kappa_{\alpha}(A, \mu_2) < 1$ then for all $\varphi \in \Hscr_{\alpha}(\Pp(\R^d))$
$$
    \left|  \int_{\Pp(\R^d)} \varphi d\nu_1 - \int_{\Pp(\R^d)} \varphi d \nu_2    \right| \leq
       \frac{ d(\mu_1,  \mu_2) }{1-\kappa} v_{\alpha}(\varphi)
$$
where $\nu_i\in \Prob(\Pp(\R^d))$ is the stationary measure
of $(A,\mu_i)$, for $i=1,2$.
\end{lema}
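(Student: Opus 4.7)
My plan is to imitate the proof of Theorem~\ref{teor abstr cont}, with the pair of base maps $(M,M')$ replaced by the pair of measures $(\mu_1,\mu_2)$. The two ingredients required are already in place: the preceding lemma provides the one-step comparison
\[
\nrm{\Lscr_{A,\mu_1}\varphi-\Lscr_{A,\mu_2}\varphi}_\infty \leq d(\mu_1,\mu_2)\,v_\alpha(\varphi),
\]
and Lemma~\ref{uniform kappa alpha} supplies the uniform contraction $\kappa_\alpha(A^n,\mu_j^n)\leq \kappa$ on the semi-normed space $(\Hscr_\alpha,v_\alpha)$ throughout a neighbourhood of $(A_0,\mu_0)$. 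Following the reduction used in the proof of Proposition~\ref{prop cont mat}, I would first reduce to the case $n=1$, renaming $(A^n,\mu_j^n)$ as $(A,\mu_j)$ and absorbing the local Lipschitz behaviour of the maps $A\mapsto A^n$ and $\mu\mapsto \mu^n$ into $\delta$. By Proposition~\ref{prop valpha kappa valpha} this yields $v_\alpha(\Lscr_{A,\mu_j}^k\varphi)\leq \kappa^k\,v_\alpha(\varphi)$ for every $k\geq 0$.

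Next I would expand the difference of iterates with the familiar telescoping identity
\[
\Lscr_{A,\mu_1}^n-\Lscr_{A,\mu_2}^n \;=\; \sum_{i=0}^{n-1} \Lscr_{A,\mu_2}^i\circ (\Lscr_{A,\mu_1}-\Lscr_{A,\mu_2})\circ \Lscr_{A,\mu_1}^{n-i-1}.
\]
Since each $\Lscr_{A,\mu_2}^i$ is Markov and therefore a contraction for $\nrm{\cdot}_\infty$, applying the one-step comparison to the function $\Lscr_{A,\mu_1}^{n-i-1}\varphi$ together with the $v_\alpha$-contraction produces
\[
\nrm{(\Lscr_{A,\mu_1}-\Lscr_{A,\mu_2})\,\Lscr_{A,\mu_1}^{n-i-1}\varphi}_\infty \;\leq\; d(\mu_1,\mu_2)\,\kappa^{n-i-1}\,v_\alpha(\varphi),
\]
and summing the geometric series $\sum_{i=0}^{n-1}\kappa^{n-i-1}\leq 1/(1-\kappa)$ gives the first inequality.

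For the second inequality, under the extra hypothesis $\kappa_\alpha(A,\mu_2)<1$ both $\nu_1$ and $\nu_2$ exist as the unique stationary measures, and the spectral decomposition from Section~3 guarantees that $\Lscr_{A,\mu_j}^n\varphi$ converges uniformly to $(\int\varphi\,d\nu_j)\,\one$. Taking the supremum in $n$ of the first inequality, or equivalently passing to the limit $n\to +\infty$, then delivers the claimed bound, exactly as in the closing lines of the proof of Theorem~\ref{teor abstr cont}. The only genuinely non-routine point is the reduction step in the first paragraph, since one must check that the Lipschitz distortion coming from $\mu\mapsto\mu^n$ does not spoil the final constant; everything else is a direct transcription of the Markov telescoping trick.
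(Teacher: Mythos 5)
Your argument reproduces the paper's proof: the telescoping identity for $\Lscr_{A,\mu_1}^n-\Lscr_{A,\mu_2}^n$, the one-step bound from the previous lemma, the $v_\alpha$-contraction to produce the geometric series, and the passage to the limit via the stationary measures. You are in fact slightly more explicit than the paper about the reduction to $n=1$ (which the paper's proof of this lemma leaves implicit, doing it only in the subsequent proposition), but the strategy is identical.
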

\begin{proof}
 We know that
$$
  \Lscr_{A, \mu_1}^n - \Lscr_{A, \mu_2}^n = \sum_{i=0}^{n-1} \Lscr_{A, \mu_2}^i
  (\Lscr_{A, \mu_1} - \Lscr_{A, \mu_2} ) \Lscr_{A, \mu_1}^{n-i-1}
$$
Hence
\begin{align*}
\nrm{ \Lscr_{A, \mu_1}^n(\varphi) - \Lscr_{A, \mu_2}^n(\varphi)}_{\infty}  &\leq
  \sum_{i=0}^{n-1} \nrm{ \Lscr_{A, \mu_2}^i((\Lscr_{A, \mu_1} - \Lscr_{A, \mu_2} )( \Lscr_{A, \mu_1}^{n-i-1}(\varphi)))  }_{\infty} \\
& \leq  \sum_{i=0}^{n-1} \nrm{ (\Lscr_{A, \mu_1} - \Lscr_{A, \mu_2} )( \Lscr_{A, \mu_1}^{n-i-1}(\varphi))  }_{\infty}  \\
&\leq
  \sum_{i=0}^{n-1}  d(\mu_1,  \mu_2)\,  v_{\alpha}( \Lscr_{A, \mu_1}^{n-i-1}(\varphi)) \\
  &\leq d(\mu_1, \mu_2) \, v_{\alpha}(\varphi)  \sum_{i=0}^{n-1} \sigma^{n-i-1} \leq
   \frac{  d(\mu_1,   \mu_2)   }{ 1-\kappa } \,v_{\alpha}(\varphi) .
\end{align*}

Proceeding  exactly as in the proof of~\eqref{int phi nu - int phi nu'} we get
$$
 \left|  \int  \varphi d\nu_1 - \int  \varphi d \nu_2    \right|  \leq
    \sup_n{ \nrm{ \Lscr_{A, \mu_1}^n(\varphi) - \Lscr_{A, \mu_2}^n(\varphi)}_{\infty}     }
     \leq \frac{ d(\mu_1,   \mu_2) }{1-\kappa} v_{\alpha}(\varphi)
$$
\end{proof}

\begin{prop}
\label{prop cont prob}
Given $(A_0,\mu_0)\in \Irred_d(\Sigma)$, there are  positive constants
$C$ and $\delta$ such that for all $A\in \Cscr(\Sigma,\GL_d(\R))$ and
$\mu_1,\mu_2\in \Prob(\Sigma)$,
 if   $d(\mu_j,\mu)<\delta$, $j=1,2$, and $d_\infty(A,A_0)<\delta$ then
$$ \Mod{L_1(A,\mu_1)-L_1(A,\mu_2)} \leq C\, d(\mu_1,\mu_2)   .$$
\end{prop}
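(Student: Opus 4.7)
The plan is to follow the same scheme as in the proof of Proposition~\ref{prop cont mat}, but with the roles of $A$ and $\mu$ swapped: now $A$ is held fixed (varying in a $\delta$-neighbourhood of $A_0$) while the probability measures vary. As in that earlier proof, I would first reduce to the case where $\kappa_\alpha(A,\mu_j) \leq \kappa < 1$ for $j=1,2$ by passing to a sufficiently large iterate $(A^n,\mu_j^n)$ supplied by Lemma~\ref{uniform kappa alpha}. This reduction is legitimate because $L_1(A^n,\mu^n)=n\,L_1(A,\mu)$, the total variation satisfies the standard tensor-power bound $d(\mu_1^n,\mu_2^n)\leq n\,d(\mu_1,\mu_2)$, and the maps $A\mapsto A^n$, $\mu\mapsto \mu^n$ are locally Lipschitz, so a Lipschitz estimate at the $n$-th iterate level descends to the original pair with a Lipschitz constant of the same order.

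Once one is in the regime $\kappa_\alpha(A,\mu_j)\leq \kappa <1$, write $\varphi_{A(x)}(\hat p):=\log\nrm{A(x)\,p}$ and denote by $\nu_j\in\Prob(\Pp(\R^d))$ the unique stationary measure of $(A,\mu_j)$, $j=1,2$. Furstenberg's formula~\eqref{Furstenberg formula} then gives the decomposition
\begin{align*}
L_1(A,\mu_1)-L_1(A,\mu_2)
&= \int_\Sigma \left[\int_{\Pp(\R^d)}\!\varphi_{A(x)}\,d\nu_1 - \int_{\Pp(\R^d)}\!\varphi_{A(x)}\,d\nu_2\right]\!d\mu_1(x) \\
&\quad + \int_\Sigma \left(\int_{\Pp(\R^d)}\!\varphi_{A(x)}\,d\nu_2\right)\!d(\mu_1-\mu_2)(x).
\end{align*}
The second summand is bounded by $\log R\cdot d(\mu_1,\mu_2)$, where $R$ is a uniform bound on $\nrm{A(x)^{\pm 1}}$ over $x\in\Sigma$ and $A$ in the $\delta$-neighbourhood of $A_0$. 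For the first summand, for each fixed $x\in\Sigma$ the preceding lemma (applied with $\varphi=\varphi_{A(x)}$) yields
\[
\left|\int_{\Pp(\R^d)}\!\varphi_{A(x)}\,d\nu_1 - \int_{\Pp(\R^d)}\!\varphi_{A(x)}\,d\nu_2\right|
\leq \frac{d(\mu_1,\mu_2)}{1-\kappa}\,v_\alpha(\varphi_{A(x)}).
\]
By property (A2) one has $v_\alpha(\varphi_{A(x)})\leq v_1(\varphi_{A(x)})\leq C_R$ uniformly in $x$ (as in the last step of the proof of Proposition~\ref{prop cont mat}), so integrating against $\mu_1$ and combining both summands yields $|L_1(A,\mu_1)-L_1(A,\mu_2)|\leq C\,d(\mu_1,\mu_2)$.

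The quantitative stability of the stationary measure in $\mu$ was already established in the preceding lemma, so the main work here is really just the two-term decomposition above. The one remaining delicacy is the uniformity of the constants $R$, $\alpha$, $\kappa$, $n$ and $C_R$ over the $\delta$-neighbourhood of $(A_0,\mu_0)$; this is provided by Lemma~\ref{uniform kappa alpha} together with compactness of $\Sigma$ and the continuity of $\kappa_\alpha$ in its arguments, exactly as in Proposition~\ref{prop cont mat}. I expect this bookkeeping of uniform constants to be the only mildly technical point, while no new analytic idea is required beyond what the preceding lemma supplies.
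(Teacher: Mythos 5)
Your proof matches the paper's essentially step for step: the same reduction to $n=1$ via Lemma~\ref{uniform kappa alpha} and the local Lipschitz property of $\mu\mapsto\mu^n$, the same add-and-subtract decomposition $\E_{\mu_1}\bigl[\smallint \varphi_A\,d\nu_1 - \smallint\varphi_A\,d\nu_2\bigr] + (\E_{\mu_1}-\E_{\mu_2})\bigl[\smallint\varphi_A\,d\nu_2\bigr]$, the first term controlled by the preceding lemma's stationary-measure stability bound and the second by $\log R\cdot d(\mu_1,\mu_2)$. Your remarks on $v_\alpha(\varphi_{A(x)})\le v_1(\varphi_{A(x)})\le C_R$ and on uniformity of constants only make explicit what the paper leaves implicit, so this is the same argument, not a different one.
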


\begin{proof}
As in the proof of Proposition~\ref{kapa alpha(M)} we will assume that $n=1$, for
the constant $n$ in Lemma~\ref{uniform kappa alpha}.
Using the Furstenberg's formula we get
\begin{align*}
& |L_1(A, \mu_1) - L_1(A, \mu_2) | =
 \Mod{ \E_{\mu_1} \left[  \int \varphi_A d\nu_1 \right] -  \E_{\mu_2} \left[  \int \varphi_A d\nu_2 \right]  } \\
& \leq  | \E_{\mu_1} \left[  \int \varphi_A d\nu_1 \right] - \E_{\mu_1} \left[  \int \varphi_A d\nu_2 \right] +
 \E_{\mu_1} \left[  \int \varphi_A d\nu_2 \right] - \E_{\mu_2} \left[  \int \varphi_A d\nu_2 \right] |  \\
 &\leq
 \E_{\mu_1} \left[ | \int \varphi_A d\nu_1 - \int \varphi_A d\nu_2   | \right] +
 \int | \int \varphi_A d\nu_2 | \, d |\mu_1 - \mu_2|  \\
 & \leq
    \log{\nrm{A}}\, d(\mu_1, \mu_2) \frac{1}{1-\kappa} v_{\alpha}(\varphi_A) + \log{\nrm{A}} \,d(\mu_1, \mu_2) .
\end{align*}
\end{proof}

\bigskip

\section{Approximating the stationary measure}

In this section we prove the approximation theorem (Theorem~\ref{teor approx}) mentioned in the introduction and describe a procedure to approximate the first Lyapunov exponent, as well as the stationary measure, for a random cocycle over a Bernoulli shift in finitely many symbols.

Throughout this section we assume that  $\Sigma=\{1,\ldots, k\}$ and $(\underline{A},\underline{p})$ is a random cocycle over the Bernoulli shift $T\colon \Omega_\Sigma\to\Omega_\Sigma$, where
$\underline{A}=(A_1,\ldots, A_k)$ is a list of matrices in $\GL_d(\R)$ and $\underline{p}=(p_1,\ldots, p_k)$ is a probability vector.

\subsection{An approximation theorem}
The {\em discretization} of a random cocycle
$(\underline{A},\underline{p})$ is  a pair $(\FF,\underline{f})$,
where $\FF\subset \Pp(\R^d)$ is a finite set and $\underline{f}=(f_1,\ldots, f_k)$ is a list of maps $f_j\colon \FF\to\FF$, $j=1,\ldots, k$.

 The discretization $(\FF,\underline{f})$ determines the Markov operator
$\Lscr_\FF\colon \R^\FF\to\R^\FF$,
$$ (\Lscr_\FF \varphi)(\hat v):=
\sum_{j=1}^k p_j\, \varphi(f_j(\hat v)) , $$
which can also be viewed as the stochastic $\FF\times \FF$ matrix
$P_\FF=(P_\FF(\hat w, \hat v)  )_{\hat w,\hat v\in\FF}$ with entries
$$P_\FF(\hat w, \hat v):= \sum_{f_j(\hat v)=\hat w} p_j.$$

Given $0<\alpha<1$, the {\em $\alpha$-error} of the discretization  is defined to
\begin{equation}
\label{def Delta alpha}
 \Delta_\alpha\left(\underline{A},\FF\right)
:= \max_{\hat v\in\FF}  \sum_{j=1}^k p_j \left(
d( \Phi_{A_j}(\hat v), f_j(\hat v) )
 \right)^\alpha .
\end{equation}

Define also $H_\alpha(\underline{A},\underline{p}) \colon \Pp(\R^d)\to \R$,
$$ H_\alpha( \underline{A}, \underline{p})(\hat v)=
\sum_{j=1}^k p_j\,\nrm{(D\Phi_{A_j})_{\hat v}}^\alpha   $$
and notice that by Proposition~\ref{prop kappa alpha},
\begin{equation}
\label{kappa alpha = max H alpha}
\kappa_\alpha(\underline{A},\underline{p})  =
\max_{\hat v\in\Pp(\R^d)} H_\alpha(\underline{A}, \underline{p})(\hat v)  .
\end{equation}

A stochastic matrix $P$ is called {\em mixing} when it has a single final class, which moreover is aperiodic (see~\cite[Theorem 1.31]{Walters}).
If a stochastic matrix $P$ is mixing then it has a unique
stationary probability vector, which is supported on the final class of $P$.

\begin{teor}
\label{teor approx}
Given $0<\alpha<1$, consider a random cocycle
 $(\underline{A},\underline{p})$ such that $\kappa = \kappa_\alpha(\underline{A},\underline{p})< 1$, and let
 $(\FF,\underline{f})$ be a discretization of $(\underline{A},\underline{p})$ with error $\Delta_\alpha = \Delta_\alpha\left(\underline{A},\FF\right)$.
 Assume also that the stochastic matrix $P_\FF$ is mixing and denote by $\nu_\FF$ the stationary probability vector of $P_\FF$.
 Then for all $\varphi\in\Hscr_\alpha(X)$,
\begin{equation}
\label{approx error}
\Mod{\int_{\Pp(\R^d)} \varphi\,\dd \nu_A - \int_\FF \varphi \, \dd \nu_\FF } \leq \frac{\Delta_\alpha}{1-\kappa }\, v_\alpha(\varphi) .
\end{equation}
\end{teor}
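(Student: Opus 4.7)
The plan is to mimic the proof of Theorem~\ref{teor abstr cont} with one operator living on $\Cscr(\Pp(\R^d))$ and the other on $\R^\FF$. Write $\Lscr := \Lscr_{(\underline{A},\underline{p})}$, introduce the restriction operator $R : \Cscr(\Pp(\R^d)) \to \R^\FF$ defined by $R\varphi := \varphi|_\FF$, and compare $R\Lscr^n\varphi$ with $\Lscr_\FF^n R\varphi$ pointwise on $\FF$. By Proposition~\ref{prop valpha kappa valpha} together with~\eqref{kappa alpha = max H alpha}, the operator $\Lscr$ contracts the seminorm $v_\alpha$ by the factor $\kappa$; both $\Lscr$ and $\Lscr_\FF$ are Markov operators and so do not expand the sup norm.

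The first step is a one-step comparison: for $\varphi \in \Hscr_\alpha(\Pp(\R^d))$ and $\hat v \in \FF$, the quantity $(R\Lscr\varphi - \Lscr_\FF R\varphi)(\hat v)$ equals $\sum_{j=1}^k p_j\,[\varphi(\Phi_{A_j}(\hat v)) - \varphi(f_j(\hat v))]$, whose absolute value is bounded by $v_\alpha(\varphi)\,\sum_{j=1}^k p_j\, d(\Phi_{A_j}(\hat v), f_j(\hat v))^\alpha \leq \Delta_\alpha\, v_\alpha(\varphi)$ by the very definition~\eqref{def Delta alpha} of the $\alpha$-error. Hence $\|R\Lscr\varphi - \Lscr_\FF R\varphi\|_\infty \leq \Delta_\alpha\, v_\alpha(\varphi)$.

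The second step is to iterate by means of the telescoping identity
\[
R\Lscr^n - \Lscr_\FF^n R \;=\; \sum_{i=0}^{n-1} \Lscr_\FF^{\,i}\,(R\Lscr - \Lscr_\FF R)\,\Lscr^{n-i-1},
\]
and to use that $\Lscr_\FF^{\,i}$ is sup-norm non-expansive together with $v_\alpha(\Lscr^{n-i-1}\varphi) \leq \kappa^{n-i-1}\, v_\alpha(\varphi)$. Summing the resulting geometric series gives $\|R\Lscr^n\varphi - \Lscr_\FF^n R\varphi\|_\infty \leq \Delta_\alpha\, v_\alpha(\varphi)/(1-\kappa)$, which is formally the same computation as at the end of the proof of Theorem~\ref{teor abstr cont}.

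The final step lets $n \to \infty$. Since $\kappa<1$ the iterates $\Lscr^n\varphi$ converge uniformly on $\Pp(\R^d)$ to $\bigl(\int_{\Pp(\R^d)} \varphi\, d\nu_{\underline{A}}\bigr)\,\one$, exactly as used in the proof of Theorem~\ref{teor abstr cont}. Since $P_\FF$ is a mixing stochastic matrix with unique stationary vector $\nu_\FF$, the classical finite-Markov-chain convergence theorem gives $\Lscr_\FF^n R\varphi \to \bigl(\int_\FF \varphi\, d\nu_\FF\bigr)\,\one$ on $\FF$. Evaluating the uniform bound at any $\hat v \in \FF$ and passing to the limit yields~\eqref{approx error}. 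The main (and only) conceptual obstacle is reconciling the two function spaces: introducing the restriction $R$ and verifying that $\Lscr_\FF$ inherits both the Markov property and the iterate-convergence through the mixing hypothesis resolves it, after which the argument reduces verbatim to the telescoping used in the abstract setting.
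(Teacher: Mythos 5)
Your proof is correct and follows essentially the same route as the paper's: the identical telescoping identity, the same one-step bound via $\Delta_\alpha$, and the same passage to the limit using the Markov-chain convergence for $P_\FF$ and $\Lscr^n\varphi\to(\smallint\varphi\,d\nu_{\underline{A}})\one$. The one small difference is that you make the restriction operator $R$ explicit, whereas the paper suppresses it and treats the $\nrm{\cdot}_\FF$-comparison between $\Lscr_{\underline{A}}^n(\varphi)$ and $\Lscr_\FF^n(\varphi)$ implicitly; your version is a bit more careful about the two function spaces but is otherwise the same argument.
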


\begin{proof}
Consider the norm  $\nrm{\varphi}_\FF:=\max_{\hat v\in\FF} \abs{\varphi(\hat v)}$ in $\R^\FF$.
Notice that because $\Lscr_\FF$ is a Markov operator, for every $\varphi\in \R^\FF$\,
$\nrm{\Lscr_\FF(\varphi)}_\FF\leq \nrm{\varphi}_\FF $, and
\begin{align*}
 \nrm{\Lscr_{\underline{A}}(\varphi)  - \Lscr_{\FF}(\varphi)}_\FF &\leq \max_{\hat v\in \FF} \,\sum_{j=1}^k p_j\, \Mod{ \varphi(\Phi_{A_j}(\hat v))-\varphi(f_j(\hat v))} \\
 &\leq v_\alpha(\varphi)\, \max_{\hat v\in \FF}\,  \sum_{j=1}^k p_j\, d(\Phi_{A_j}(\hat v), f_j(\hat v))^\alpha   \nonumber\\
 & = \Delta_\alpha(\underline{A},\FF)\, v_\alpha(\varphi) .
\end{align*}
Using this and the formula
$$ \Lscr_{\underline{A}}^n  - \Lscr_{\FF}^n
= \sum_{i=0}^{n-1} \Lscr_{\FF}^i\circ (\Lscr_{\underline{A}}-\Lscr_{\FF})\circ \Lscr_{\underline{A}}^{n-i-1} $$
we get
\begin{align*}
\nrm{ \Lscr_{\underline{A}}^n (\varphi)- \Lscr_{\FF}^n (\varphi) }_\FF &\leq \sum_{i=0}^{n-1} \nrm{ \Lscr_{\FF}^i((\Lscr_{\underline{A}}-\Lscr_{\FF}) ( \Lscr_{\underline{A}}^{n-i-1}(\varphi))) }_\FF\\
&\leq \sum_{i=0}^{n-1} \nrm{ (\Lscr_{\underline{A}}-\Lscr_{\FF}) ( \Lscr_{\underline{A}}^{n-i-1}(\varphi)) }_\FF\\
&\leq \sum_{i=0}^{n-1} \Delta_\alpha(\underline{A},\FF)\, v_\alpha(\Lscr_{\underline{A}}^{n-i-1}(\varphi)))  \\
&\leq \Delta_\alpha(\underline{A},\FF)\, v_\alpha( \varphi) \, \sum_{i=0}^{n-1} \kappa^{n-i-1}    \\
&\leq \frac{\Delta_\alpha(\underline{A},\FF)}{1-\kappa}\, v_\alpha(\varphi) .
\end{align*}
Finally, since
$\lim_{n\to +\infty} \Lscr_{\underline{A}}^n(\varphi) =\left(\smallint \varphi\, \dd\nu_{\underline{A}}\right)\one$   and
$\lim_{n\to +\infty} \Lscr_{\FF}^n(\varphi) =\left(\smallint \varphi\, \dd\nu_{\FF}\right)\one$,
\begin{align*}
\Mod{\int  \varphi\,d\nu_{\underline{A}} - \int  \varphi\,d\nu_{\FF} }  &\leq \sup_{n}
\nrm{ \Lscr_{\underline{A}}^n(\varphi) - \Lscr_{\FF}^n(\varphi)}_\infty \leq \frac{\Delta_\alpha(\underline{A},\FF)}{1-\kappa}\, v_\alpha(\varphi)  .
\end{align*}
\end{proof}

\begin{rmk}
The previous theorem entails a procedure to compute weak approximations of the stationary measure $\nu_A$.
\end{rmk}

\bigskip

\subsection{Special bounds for $\SL_2$ cocycles  }
Let
$\underline{A}=(A_1,\ldots, A_k)\in\SL_2(\R)^k$.
In this setting $d=2$ and we denote by $\Pp$ the $1$-dimensional projective space $\Pp(\R^2)$.

\begin{prop} Given $\alpha\in (0,1)$
and a unit vector $x\in\R^2$,
\begin{equation}
\label{H alpha}
H_\alpha(\underline{A},\underline{p})(\hat x) =\sum_{j=1}^k p_j\frac{1}{\nrm{A_j x}^{2\alpha}} .
\end{equation}
\end{prop}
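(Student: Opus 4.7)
The plan is to apply the general formula for the derivative of the projective action (the one invoked in the proof of Lemma~\ref{uniform kappa alpha}, namely $\nrm{(D\Phi_A)_{\hat p}}\leq \nrm{\wedge_2 A}/\nrm{A p}^2$) and observe that in dimension two this inequality collapses to an equality which is moreover trivialized by $\det A=1$.

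First I would fix a unit vector $x\in\R^2$ and a unit tangent vector $\dot x\in T_{\hat x}\Pp(\R^2)$, which we may identify with a unit vector $\dot x\in\R^2$ orthogonal to $x$. Differentiating $\Phi_A(\hat p)=\widehat{A p}$ at $\hat x$ gives
\[
(D\Phi_A)_{\hat x}\cdot \dot x = \frac{1}{\nrm{A x}}\,\pi_{A x}^{\perp}(A\dot x),
\]
where $\pi_{A x}^{\perp}$ is the orthogonal projection onto the line perpendicular to $A x$. The key dimensional feature is that this perpendicular line is one-dimensional, so $\nrm{\pi_{A x}^{\perp}(A\dot x)}$ equals the area of the parallelogram spanned by $A x/\nrm{A x}$ and $A\dot x$, i.e.\ $\nrm{A x \wedge A\dot x}/\nrm{A x}$. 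Using $\nrm{A x\wedge A\dot x}=\abs{\det A}\,\nrm{x\wedge \dot x}=\abs{\det A}$ (the last equality because $\{x,\dot x\}$ is an orthonormal pair), we obtain
\[
\nrm{(D\Phi_A)_{\hat x}} = \frac{\abs{\det A}}{\nrm{A x}^2}.
\]

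For $A\in\SL_2(\R)$ we have $\abs{\det A}=1$, hence $\nrm{(D\Phi_{A_j})_{\hat x}}=\nrm{A_j x}^{-2}$. Plugging this into the definition of $H_\alpha$ right before the statement yields
\[
H_\alpha(\underline{A},\underline{p})(\hat x)=\sum_{j=1}^k p_j\,\nrm{(D\Phi_{A_j})_{\hat x}}^{\alpha}=\sum_{j=1}^k p_j\,\frac{1}{\nrm{A_j x}^{2\alpha}},
\]
which is exactly~\eqref{H alpha}.

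There is no real obstacle here; the only subtle point is justifying the equality (and not just inequality) in the derivative bound, which is where the dimension $d=2$ is used crucially. If the cleaner Appendix formula already encodes this identity, the proof becomes a single line; otherwise the one-line computation above, or a direct slope-formula calculation in a Möbius chart on $\RP^1$, fills the gap.
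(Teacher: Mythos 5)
Your proof is correct and takes the same approach as the paper: the paper simply asserts $\nrm{(D\Phi_M)_{\hat x}} = \nrm{Mx}^{-2}$ for $M\in\SL_2(\R)$ and cites Herman for it, whereas you derive this identity from the appendix formula \eqref{D PhiA} together with the wedge-product and $\det A=1$ observations. Your derivation is the expected one and fills in what the paper leaves to the reference.
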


\begin{proof}
Given a matrix $M\in\SL_2(\R)$, and a unit vector $x\in\R^2$,
check that
$$ \nrm{ (D \Phi_M)_{\hat x}}=\frac{1}{\nrm{M\,x}^2} .$$
See formula (1) of section 5.14 in~\cite{Herman}.
\end{proof}

\begin{prop}
Given $\phi\in\Hscr_\alpha(\Pp)$, $\hat x, \hat y\in\Pp$,
$$ \frac{\abs{\Lscr_{\underline{A}}(\phi)(\hat x)-\Lscr_{\underline{A}}(\phi)(\hat y)}}{d(\hat x, \hat y)^\alpha}
\leq \frac{H_\alpha(\hat x) + H_\alpha(\hat  y)}{2}\,v_\alpha(\phi) \leq \kappa\,v_\alpha(\phi) ,$$
where $H_\alpha=H_\alpha(\underline{A},\underline{p})$ and $\kappa=\kappa_\alpha(\underline{A},\underline{p})$.
In particular, $v_\alpha(\Lscr_{\underline{A}}(\phi)\leq \kappa\,v_\alpha(\phi)$.
\end{prop}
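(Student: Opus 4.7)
The plan is to unpack the definition of $\Lscr_{\underline{A}}(\phi)$, apply H\"older continuity of $\phi$ pointwise, exploit the special $\SL_2$ formula for the projective metric under $\Phi_{A_j}$, and finally symmetrize via AM--GM to recognize the quantity $H_\alpha$ at the two endpoints.

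First I would write the difference as
\[
\Lscr_{\underline{A}}(\phi)(\hat x) - \Lscr_{\underline{A}}(\phi)(\hat y) = \sum_{j=1}^k p_j \bigl[\phi(\Phi_{A_j}(\hat x)) - \phi(\Phi_{A_j}(\hat y))\bigr],
\]
and use the H\"older seminorm bound $|\phi(\Phi_{A_j}(\hat x)) - \phi(\Phi_{A_j}(\hat y))| \leq v_\alpha(\phi)\, d(\Phi_{A_j}(\hat x),\Phi_{A_j}(\hat y))^\alpha$. Then the key geometric input for $A\in\SL_2(\R)$ on $\Pp = \Pp(\R^2)$ is the classical identity
\[
d(\Phi_A(\hat x), \Phi_A(\hat y)) = \frac{d(\hat x, \hat y)}{\nrm{A\,x}\,\nrm{A\,y}},
\]
where $x,y$ are unit representatives of $\hat x,\hat y$ (this is the $\SL_2$ analogue of the derivative formula already used, and it appears in the appendix). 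This reduces the problem to estimating $\sum_j p_j\, \nrm{A_j x}^{-\alpha}\nrm{A_j y}^{-\alpha}$.

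Next I would apply the AM--GM inequality termwise:
\[
\frac{1}{\nrm{A_j x}^{\alpha}\,\nrm{A_j y}^{\alpha}} \leq \frac{1}{2}\left(\frac{1}{\nrm{A_j x}^{2\alpha}} + \frac{1}{\nrm{A_j y}^{2\alpha}}\right).
\]
Weighting by $p_j$ and summing, the right-hand side becomes $\tfrac{1}{2}(H_\alpha(\hat x) + H_\alpha(\hat y))$ by formula~\eqref{H alpha}. Collecting everything yields
\[
\frac{|\Lscr_{\underline{A}}(\phi)(\hat x) - \Lscr_{\underline{A}}(\phi)(\hat y)|}{d(\hat x,\hat y)^\alpha} \leq \frac{H_\alpha(\hat x) + H_\alpha(\hat y)}{2}\,v_\alpha(\phi),
\]
which is the first claimed inequality.

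The second inequality is immediate from~\eqref{kappa alpha = max H alpha}, since $\tfrac{1}{2}(H_\alpha(\hat x)+H_\alpha(\hat y)) \leq \max_{\hat v\in\Pp} H_\alpha(\hat v) = \kappa$. Taking supremum over $\hat x \neq \hat y$ delivers the ``in particular'' conclusion $v_\alpha(\Lscr_{\underline{A}}(\phi))\leq \kappa\, v_\alpha(\phi)$. The only non-routine step is the $\SL_2$ projective-distance identity; everything else is H\"older continuity plus AM--GM, so I do not anticipate a substantive obstacle.
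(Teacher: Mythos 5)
Your proof is correct and follows essentially the same route as the paper's, which cites Proposition~\ref{incr ratio PhiA alpha} in the Appendix: the key step in both is the AM--GM bound $\nrm{Ax}^{-\alpha}\nrm{Ay}^{-\alpha}\leq\tfrac{1}{2}(\nrm{Ax}^{-2\alpha}+\nrm{Ay}^{-2\alpha})$ applied to the exact $\SL_2$ projective-distance formula $\delta(\Phi_A\hat x,\Phi_A\hat y)=\delta(\hat x,\hat y)/(\nrm{Ax}\,\nrm{Ay})$. You merely work the $d=2$, $\det A=1$ special case directly (so $\nrm{Ax\wedge Ay}=\nrm{x\wedge y}$ drops out), whereas the Appendix proposition carries the wedge factor through the general $\GL_d$ computation.
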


\begin{proof}
See Proposition~\ref{incr ratio PhiA alpha} in the Appendix.
\end{proof}

\begin{prop} Let $H_\alpha=H_\alpha(\underline{A},\underline{p})$.
If
$\nrm{\underline{A}}_\infty := \max_{1\leq j\leq k} \nrm{A_j} $
then
$$ \abs{H_\alpha'(x)} \leq 2\,\alpha\,(\nrm{\underline{A}}_\infty)^{2(1+\alpha)} .$$
\end{prop}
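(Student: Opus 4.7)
The plan is to parametrize $\Pp=\Pp(\R^2)$ by an arc-length (angular) parameter and differentiate directly. Write $x(\theta)=(\cos\theta,\sin\theta)$, so $\|x'(\theta)\|=1$, and view $H_\alpha$ as a $\pi$-periodic function of $\theta$; this is the natural interpretation of $H_\alpha'$ on the one-dimensional manifold $\Pp$. Since
\[
H_\alpha(\theta)=\sum_{j=1}^k p_j\,\bigl(\|A_j x(\theta)\|^2\bigr)^{-\alpha},
\]
the chain rule yields
\[
\frac{d}{d\theta}\,\|A_j x(\theta)\|^{-2\alpha}
= -2\alpha\,\|A_j x\|^{-2\alpha-2}\,\langle A_j x,\, A_j x'\rangle.
\]

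The next step is to bound this pointwise in $\theta$. By Cauchy--Schwarz and $\|x'\|=1$,
\[
|\langle A_j x, A_j x'\rangle|\le \|A_j x\|\,\|A_j x'\|\le \|A_j x\|\,\|A_j\|,
\]
so
\[
\Bigl|\tfrac{d}{d\theta}\|A_j x\|^{-2\alpha}\Bigr|\le 2\alpha\,\|A_j\|\,\|A_j x\|^{-2\alpha-1}.
\]
Here I would invoke the key identity for $\SL_2(\R)$: its singular values are $s$ and $s^{-1}$, hence $\|A_j^{-1}\|=\|A_j\|$, and therefore $\|A_j x\|^{-1}\le \|A_j^{-1}\|=\|A_j\|$ for every unit $x$. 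Consequently $\|A_j x\|^{-2\alpha-1}\le \|A_j\|^{2\alpha+1}$, giving the per-summand bound
\[
\Bigl|\tfrac{d}{d\theta}\|A_j x\|^{-2\alpha}\Bigr|\le 2\alpha\,\|A_j\|^{2(1+\alpha)}.
\]

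To finish, I would sum over $j$ with weights $p_j$ and use $\sum_j p_j=1$ together with $\|A_j\|\le \|\underline{A}\|_\infty$, obtaining
\[
|H_\alpha'(\theta)|\le \sum_{j=1}^k p_j\,2\alpha\,\|A_j\|^{2(1+\alpha)}\le 2\alpha\,\|\underline{A}\|_\infty^{2(1+\alpha)}.
\]
There is no genuine obstacle here: the argument is a short chain-rule computation plus Cauchy--Schwarz. The only subtle point worth flagging explicitly is the use of the $\SL_2$-specific equality $\|A^{-1}\|=\|A\|$, which is precisely what converts the singularity $\|A_j x\|^{-1}$ back into a bound involving only $\|\underline{A}\|_\infty$; without unimodularity one would only get $\|A_j\|^{2\alpha}\,\|A_j^{-1}\|$ in place of $\|A_j\|^{2(1+\alpha)}$.
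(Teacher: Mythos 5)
Your argument is correct and follows essentially the same route as the paper: compute $\tfrac{d}{d\theta}\|A_j x\|^{-2\alpha}=-2\alpha\,\|A_j x\|^{-2\alpha-2}\langle A_j x,A_j x'\rangle$, bound the numerator by Cauchy--Schwarz, and control the remaining $\|A_j x\|^{-(2\alpha+1)}$ via $\|A_j x\|^{-1}\le\|A_j^{-1}\|=\|A_j\|$. The only difference is cosmetic: you make explicit the use of the $\SL_2(\R)$ identity $\|A^{-1}\|=\|A\|$, which the paper leaves implicit in passing from the derivative formula to the stated bound.
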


\begin{proof}
Given $M\in\SL_2$, consider the function
$g_M\colon\Pp\to\R$, $g_M(x)=\nrm{M x}^{-2\alpha}$.
A simple calculation gives
$$ (D g_M)_x(v)= -2\,\alpha\,\frac{\langle M x,  M v  \rangle}{\nrm{M x}^{2(\alpha+1)}} .$$
 Thus $\Mod{g_M'(x)}\leq 2\,\alpha\, \nrm{M}^{2(\alpha+1)}$ and
 $$ \Mod{ H_\alpha'(x)} \leq \sum_{j=1}^k p_j \Mod{ g_{A_j}'(x)}
 \leq  2\,\alpha\, \sum_{j=1}^k p_j   \nrm{A_j}^{2(\alpha+1)} . $$
\end{proof}

\bigskip

\subsection{Approximating method for the LE}
Let $\nu\in\Prob(\Pp(\R^d))$ be the stationary measure of $\underline{A}$ and consider
the family of functions $\phi_j\colon \Pp\to\R$,
$$ \phi_j(x):= \log \nrm{A_j x} . $$
Define also
$\psi_j=\Lscr(\phi_j)$ for  $1\leq j\leq k$.

By Furstenberg's formula~\eqref{Furstenberg formula}
$$ L_1(\underline{A})=
\sum_{j=1}^k p_j\,\int_{\Pp(\R^d)}  \phi_j(x)\, \dd\nu(x) =
\sum_{j=1}^k p_j\,\int_{\Pp(\R^d)}  \psi_j (x)\, \dd\nu(x) .$$

Given any finite set $\FF\subset \Pp(\R^d)$,
which we will refer as a {\em mesh}, consider the discretization $(\FF,\underline{f})$ of $\Lscr$
where $\underline{f}=(f_1,\ldots, f_k)$ is the following list of functions $f_j\colon \FF\to\FF$.
For each $1\leq j\leq k$ and $\hat v\in \FF$,
$f_j(\hat v)$ is the point in $\FF$ that minimizes the distance to $\Phi_{A_j}(\hat v)$.
In this way the
discretization $(\FF,\underline{f})$ is determined by the mesh $\FF$.
Let $\nu_\FF$ be the corresponding stationary measure of the stochastic matrix $P_\FF$, i.e., of Markov operator $\Lscr_\FF$,
which can be viewed as a probability vector $\nu_\FF\in\R^\FF$.
Then the following number is an approximation of the exact value $\gamma^+(\underline{A})$.
\begin{equation}
\label{def gamma plus FF}
 L_1(\underline{A},\FF) :=
\sum_{j=1}^k p_j\,\sum_{x\in \FF} \psi_j (x)\, \nu_\FF(x) .
\end{equation}
By Theorem~\ref{teor approx}, the error in this approximation is bounded by
\begin{equation}
\label{approx formula}
 \Mod{L_1(\underline{A}) - L_1(\underline{A},\FF) }
\leq \frac{\Delta_\alpha(\FF,\underline{A})}{1-\kappa_\alpha}\, V_\alpha(\underline{A},\FF),
\end{equation}
where
$$ \kappa_\alpha := \kappa_\alpha(\underline{A},\underline{p}) ,$$
and
$$  V_\alpha(\underline{A},\FF):=  \sum_{j=1}^k p_j\, v_\alpha(\psi_j) . $$
The advantage in using the functions $\psi_j=\Lscr(\phi_j)$ instead of
$\phi_j$ is that the H\"older constant $v_\alpha(\psi_j)$ is in general significantly smaller than its upper bound $\kappa_\alpha\,v_\alpha(\phi_j)$, thus improving the final error estimate.

\bigskip

In the rest of this section we describe and comment
each of the steps to implement this approximating method.

\subsubsection{Choose $\alpha$ and some iterate of the cocycle $\underline{A}$}
One needs to find $\alpha \in (0,1)$
and an integer $n\in\N$ such that
$\kappa=\kappa_\alpha(\underline{A}^n,\underline{p}^n)<1$. By Lemma~\ref{uniform kappa alpha} this always possible.

For $\SL_2$-valued cocycles our strategy
was to plot the one variable function
$H_\alpha=H_\alpha(\underline{A},\underline{p})$ for several values of $\alpha$ until it became plausible that its maximum was $<1$. When this failed we increased the number of iterates and repeated the process.

Because the number of matrices in $\underline{A}^n$ grows exponentially with $n$ one can only iterate  the cocycle   a small number of times  before the whole scheme becomes computationally too expensive.
For $\alpha\approx 1$, since the function
$H_1$ has mean value $1$, one has $\kappa_\alpha>1$.
For $\alpha\approx 0$, one has $H_\alpha\approx H_0\equiv 1$
so that $\kappa_\alpha\approx 1$. Hence the optimal choice of $\alpha$, if one wants to minimize $\kappa_\alpha$, lies somewhere between $0$ and $1$.
When $\alpha\approx 0$ we have $\Delta_\alpha\approx 1$ and the bound~\eqref{approx formula} is not so good. Similarly if $\kappa_\alpha\approx 1$ the denominator in the bound~\eqref{approx formula} becomes too small.
These constraints pose severe limitations on the class of cocycles to which this method can efficiently applied.

\subsubsection{Estimate $\kappa_\alpha$}
By~\eqref{kappa alpha = max H alpha} $\kappa_\alpha$ is the maximum of $H_\alpha=H_\alpha(\underline{A},\underline{p})$.

For $\SL_2$-cocycles, the maxima of the summunds $g_{A_j}(x):= \frac{1}{\nrm{A_j x}^{2\alpha}}$ in~\eqref{H alpha} are attained at the projective points
corresponding to the least expanding  singular directions of the matrices $A_j$.
 Splitting this data into clusters of nearby points, the barycenters of these clusters give us   best places where to search for the  local maxima of the function  $H_\alpha$.
In our opinion, using  a gradient method to find the local maxima near these clusters,
or else  a Newton method to compute the zeros of $H_\alpha'$, are  efficient schemes to estimate the global maximum
$$ \kappa_\alpha= \max_{\hat x\in\Pp} H_\alpha(\hat x) . $$

Because it was not   our goal to do rigorous numerics,
we didn't implement this scheme. Instead we used the  general purpose
function {\rm NMaximize} of {\em Mathematica} to approximate the absolute maximum of the one variable function $H_\alpha$.

\subsubsection{Choose a mesh $\FF$}
For instance a uniformly distributed mesh in $\Pp(\R^d)$. The bound on the number of mesh points should be determined in order to have an efficient computation of the stationary measure $\nu_\FF$.

\subsubsection{Compute the discretization determined by  $\FF$} This step is straightforward to implement.
We wrote its {\em Mathematica} code using the builtin function  {\rm Nearest[$data$, $x$]} which returns the nearest element to a number $x$ in a given list of real numbers $data$.

\subsubsection{Compute the stationary measure    $\nu_\FF$} There are many ways to approximate the stationary measure of a given stochastic matrix,
for instance by iteration of the stochastic matrix. We have used instead the builtin  function {\rm StationaryDistribution} of  {\em Mathematica}.

\subsubsection{Compute the Lyapunov exponent approximation    $L_1(\underline{A},\FF)$}
This step is straightforward to implement. By~\eqref{def gamma plus FF} this  involves adding up $k\cdot\Mod{\FF}$ terms.

\subsubsection{Estimate the $\alpha$-error bound    $\Delta_\alpha(\FF,\underline{A})$}
This step is also straightforward to implement. By~\eqref{def Delta alpha} this  involves maximizing a function over $\FF$.

\subsubsection{Estimate the average H\"older constant  $V_\alpha(\FF,\underline{A})$}
This is the critical step in  computational time costs. One has to estimate the H\"older constant $v_\alpha(\psi)$ for the functions
$\psi=\psi_j\colon\Pp(\R^d)\to \R$, $j=1,\ldots, k$.

For $\SL_2$ cocycles we have $d=2$, and one has to address the problem of estimating the
H\"older constant $v_\alpha(\psi)$ of a smooth function $\psi\colon\Pp\to\R$.
Denote by $\Sigma=\Sigma(\psi)\subset \Pp$ the finite set of all maxima and minima of $\psi$.
The procedure described in the step 6.3.2
may also be used to numerically approximate the extreme point   sets
$\Sigma_j:=\Sigma(\psi_j)$.
Define
$$ v_\alpha(\psi;\Sigma):= \max_{\substack{x,y\in\Sigma\\x\neq y}} \frac{\abs{\psi(x)-\psi (y)}}{d(x,y)^\alpha}. $$
The measurement $v_\alpha(\psi;\Sigma)$ is\ computable.
A problem subsists because in general
 $$ v_\alpha(\psi;\Sigma) <  v_\alpha(\psi) . $$
To estimate $v_\alpha(\psi)$,  find the pairs $(x_j,y_j)\in \Sigma(\psi)$, $j=1,\ldots, s$,
 where $x_j>y_j$ and
 $$ \frac{\abs{\psi(x_j)-\psi (y_j)}}{d(x_j,y_j)^\alpha} = v_\alpha(\psi;\Sigma) . $$
Take  each of these pairs as input in the following iterative scheme: Consider the sort of Newton method defined by
$ N_\alpha \colon ( x_0, y_0) \mapsto (x_1,y_1) $ where
\begin{align*}
x_1 &:= x_0 + \frac{1}{\psi''(x_0)}\, \left(\alpha\,\frac{\psi(y_0)-\psi(x_0)}{y_0-x_0} -\psi'(x_0)\right)  \\
y_1 &:= y_0 + \frac{1}{\psi''(y_0)}\, \left(\alpha\,\frac{\psi(y_0)-\psi(x_0)}{y_0-x_0} -\psi'(y_0)\right)  .
\end{align*}
An easy calculation shows that the critical points of the function $$ K_\alpha(x,y)=K_{\alpha,\psi}(x,y):= \frac{\psi(x)-\psi(y)}{(x-y)^\alpha} \quad (x>y) $$
are the points $(x,y)$ with $x>y$ such that
$$ \psi'(x) = \alpha\,\frac{\psi(x)-\psi(y)}{x-y} =\psi'(y) .$$
All these points are fixed points of the map $N_\alpha$. Moreover, the derivative of $N_\alpha$ vanishes at these critical points.
Hence, if $(x_0,y_0)$ is near a critical point of $K_\alpha$ then its iterates  $N_\alpha^n(x_0,y_0)$  converge  quadratically to a critical point $(x_\ast,y_\ast)$ of $K_\alpha$. In this way we can sharply approximate the absolute maxima of $K_{\alpha,\psi}$.

Because it was not   our goal to do rigorous numerics,
we didn't implement this method. Instead we used the general purpose
function {\rm NMaximize} of {\em Mathematica} to approximate the absolute maximum of the two variable function $K_{\alpha,\psi}(x,y)$.
Because in our applications we had to estimate the H\"older constants
$v_\alpha(\psi_j)$ for  the $k$ different functions $\psi_j$,
 the usage of {\em Mathematica} tool, instead of the scheme suggested above, was probably less efficient.

\subsubsection{Estimate the error bound in~\eqref{approx formula}.}
Simply combine the outputs of the steps 6.3.2, 6.3.7 and 6.3.8.

\bigskip

\section{Examples}
In the examples below we consider the following three families of matrices in $\SL_2(\R)$.
$$S_\lambda:= \begin{bmatrix}
\lambda & -1 \\ 1 & 0
\end{bmatrix},\;  D_\lambda:=\begin{bmatrix}
\lambda & 0 \\ 0 & \lambda^{-1}
\end{bmatrix} \; \text{ and }\;
R_\lambda:=\begin{bmatrix}
\cos\lambda & -\sin \lambda \\
\sin \lambda & \cos \lambda
\end{bmatrix} .$$

\bigskip

\subsection*{First example}
Consider  the cocycle generated by the symmetric matrices
$$\left\{ R_{-\frac{j\,\pi}{m}}\,D_\lambda\,  R_{\frac{j\,\pi}{m}}
\colon 1\leq j\leq m \right\} ,$$
chosen with equal probability $1/m$, with $m=8$ and $\lambda=2.2$.
We iterate this cocycle $3$ times to get a cocycle
$\underline{A}=(A_1,\ldots, A_k)$ with $k=512$ (equi-probable) matrices.

\subsection*{Second example}
Consider the Bernoulli Schr\"odinger cocycle generated by the two matrices $\left\{ S_8, S_{1.9} \right\}$
chosen with equal probability $1/2$.
Notice that $S_8$ is  hyperbolic, while $S_{1.9}$ is elliptic.
Hence this cocycle is not uniformly hyperbolic.
We iterate this cocycle $9$ times to get a cocycle
$\underline{A}=(A_1,\ldots, A_k)$ with $k=512$ (equi-probable) matrices.

\subsection*{Third example}
Consider the Bernoulli  cocycle generated by the  matrices $\left\{ D_{3.5}, R_{0.4} \right\}$
chosen with equal probability $1/2$.
This cocycle is not uniformly hyperbolic.
We iterate this cocycle $9$ times to get a cocycle
$\underline{A}=(A_1,\ldots, A_k)$ with $k=512$ (equi-probable) matrices.

\bigskip

\begin{figure}[h]
\begin{center}
\includegraphics[scale=0.5]{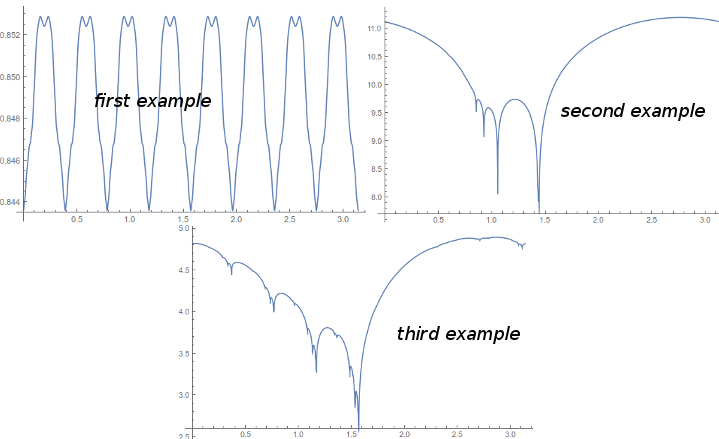}
\end{center}
\caption{Graphs of the functions  $\sum_{j=1}^k p_j\, \phi_j$}
\end{figure}

\begin{figure}[h]
\begin{center}
\includegraphics[scale=0.5]{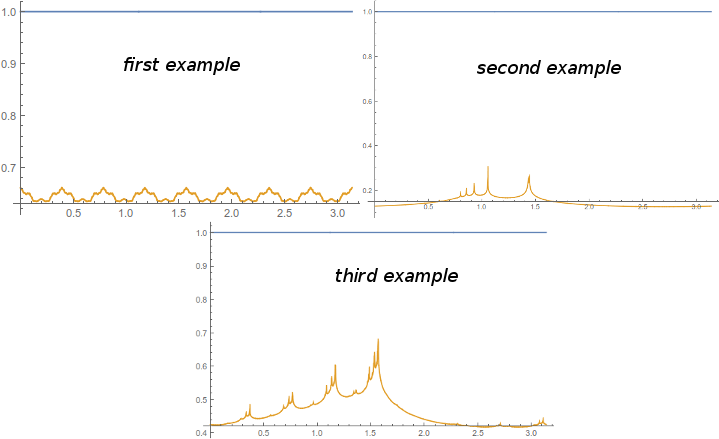}
\end{center}
\caption{Graphs of the functions  $H_\alpha(\underline{A})$}
\end{figure}

The numerics obtained are sinthesized in the following table.
We stress again that these computations do not involve any kind of rigorous error control.

\begin{table}[h]
\begin{tabular}{|c|c|c|c|}
\hline
 & $1^{{\rm st}}$ example & $2^{{\rm nd}}$ example & $3^{{\rm th}}$ example \\
\hline
\hline
$\alpha$ & $0.5$ & $0.1$ & $0.1$ \\
\hline
$\sigma_\alpha:= \max H_\alpha(\underline{A})$ &
$0.661033$ & $0.67282$ & $0.796829$ \\
\hline
$N=\Mod{\FF}$  & $1000$ & $512$ & $512$ \\
\hline
$\Delta_\alpha:=\Delta_\alpha(\underline{A},\FF)$ & $0.0285419$ & $0.517986656$ & $0.499914$ \\
\hline
$V_\alpha:=\sum_{j=1}^k p_j\, v_\alpha(\psi_j)$ & $0.284198$ & $0.0403991$ & $0.0642972$ \\
\hline
\hline
$L_1(\underline{A},\FF)$  & $0.8489$ & $10.7506$  & $4.56736$ \\
\hline
$ \Delta_\alpha\,(1-\sigma_\alpha)^{-1}\, V_\alpha$ & $0.0239301$ & $0.0639592$ &  $0.158207$ \\
\hline
\end{tabular}

\smallskip

\caption{Computed data for the three examples. The last two rows represent the approximate LE and the estimated upper bound on the error.}
\end{table}

\bigskip

\section{Appendix: some projective inequalities}

Consider the metric
$$ \delta(\hat x, \hat y):=\frac{\nrm{x\wedge y}}{\nrm{x} \nrm{y}}  $$
on the projective space $\Pp(\R^d)$, where $\hat x$ and $\hat y$ stand for projective classes of  non-zero vectors $x,y\in\R^d$.

Given a point $\hat x\in\Pp(\R^d)$
define the orthogonal projection $\pi_{\hat x} \colon\R^d\to\R^d$,
$$ \pi_{\hat x} (v):= v - \left( v\cdot x\right) \, x$$
onto the hyperplane $x^\perp$,  where $x\in\hat x$
is any unit vector representative of $\hat x$. Define also the (non linear) projection $\nu_{\hat x} \colon\R^d\to\R^d$,
$$ \nu_{\hat x}(v):= \frac{\pi_{\hat x}(v)}{\nrm{\pi_{\hat x}(v)}} .$$

\bigskip

Given a matrix  $A\in\GL_d(\R)$, let
$\Phi_A\colon \Pp(\R^d)\to \Pp(\R^d)$ be its projective action.

With the previous notation one has the following formula for the derivative of $\Phi_A$.
For any   $\hat x\in\Pp(\R^d)$ and $v\in x^\perp=T_{\hat x} \Pp(\R^d)$,
\begin{equation}
\label{D PhiA}
(D\Phi_A)_{\hat x}(v) = \frac{\pi_{\Phi_A(x)}(A\,v)}{\nrm{A x}} .
\end{equation}

\begin{rmk} \label{rmk diagonal}
Form the definition of derivative, given unit vectors $x,v\in \R^d$,
$$ \lim_{\hat y\to \hat x} \frac{\delta(\Phi_A(\hat x),\Phi_A(\hat y))}{\delta(\hat x,\hat y)}
= (D \Phi_A)_{\hat x}  (v)  $$
where the limit is taken over the projective line
$\mathrm{span}\{x,v\}\subset \Pp(\R^d)$.
\end{rmk}

\begin{prop}
\label{incr ratio PhiA alpha}
Given $\alpha>0$ and unit vectors $x,y\in \R^d$,
\begin{align*}
\left[ \frac{\delta(\Phi_A(\hat x),\Phi_A(\hat y))}{\delta(\hat x,\hat y)} \right]^\alpha
&\leq \frac{1}{2}\,\left\{
\nrm{ (D\Phi_A)_{\hat x} (\nu_{\hat x}(y))}^\alpha
+
\nrm{ (D\Phi_A)_{\hat y} (\nu_{\hat y}(x))}^\alpha \right\}\\
&\leq \frac{1}{2}\,\left\{
\nrm{ (D\Phi_A)_{\hat x} }^\alpha
+
\nrm{ (D\Phi_A)_{\hat y}}^\alpha \right\} .
\end{align*}
\end{prop}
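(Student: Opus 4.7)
The plan is to rewrite the ratio $\delta(\Phi_A(\hat x),\Phi_A(\hat y))/\delta(\hat x,\hat y)$ in a form where both endpoints $\hat x$ and $\hat y$ play symmetric roles, so that the AM--GM inequality delivers the first inequality automatically. The second inequality is then just the definition of operator norm applied to the unit vectors $\nu_{\hat x}(y)$ and $\nu_{\hat y}(x)$.

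First I would decompose $y = (x\cdot y)\,x + \nrm{\pi_{\hat x}(y)}\,\nu_{\hat x}(y)$, so that $Ay = (x\cdot y)\,Ax + \nrm{\pi_{\hat x}(y)}\,A\,\nu_{\hat x}(y)$. Taking wedges with $Ax$, the first term drops and one obtains
\[
\nrm{Ax\wedge Ay} = \nrm{\pi_{\hat x}(y)}\,\nrm{Ax\wedge A\nu_{\hat x}(y)} = \nrm{x\wedge y}\,\nrm{Ax\wedge A\nu_{\hat x}(y)},
\]
since $\nrm{\pi_{\hat x}(y)} = \nrm{x\wedge y}$ for unit $x,y$. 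Next, using the identity $\nrm{u\wedge v} = \nrm{u}\,\nrm{\pi_{\hat u}(v)}$ together with formula~\eqref{D PhiA}, I get
\[
\nrm{Ax\wedge A\nu_{\hat x}(y)} = \nrm{Ax}\,\nrm{\pi_{\Phi_A(\hat x)}(A\,\nu_{\hat x}(y))} = \nrm{Ax}^2\,\nrm{(D\Phi_A)_{\hat x}(\nu_{\hat x}(y))}.
\]
Combining these and dividing by $\nrm{Ax}\,\nrm{Ay}\,\nrm{x\wedge y}$, the key identity emerges:
\[
\frac{\delta(\Phi_A(\hat x),\Phi_A(\hat y))}{\delta(\hat x,\hat y)} = \frac{\nrm{Ax}}{\nrm{Ay}}\,\nrm{(D\Phi_A)_{\hat x}(\nu_{\hat x}(y))}.
\]

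By swapping the roles of $x$ and $y$ the same calculation yields
\[
\frac{\delta(\Phi_A(\hat x),\Phi_A(\hat y))}{\delta(\hat x,\hat y)} = \frac{\nrm{Ay}}{\nrm{Ax}}\,\nrm{(D\Phi_A)_{\hat y}(\nu_{\hat y}(x))}.
\]
Multiplying both expressions the ratios $\nrm{Ax}/\nrm{Ay}$ cancel, so
\[
\left[\frac{\delta(\Phi_A(\hat x),\Phi_A(\hat y))}{\delta(\hat x,\hat y)}\right]^{2} = \nrm{(D\Phi_A)_{\hat x}(\nu_{\hat x}(y))}\,\nrm{(D\Phi_A)_{\hat y}(\nu_{\hat y}(x))}.
\]
Raising both sides to the power $\alpha/2$ and applying the AM--GM inequality $\sqrt{ab}\le (a+b)/2$ to $a=\nrm{(D\Phi_A)_{\hat x}(\nu_{\hat x}(y))}^{\alpha}$ and $b=\nrm{(D\Phi_A)_{\hat y}(\nu_{\hat y}(x))}^{\alpha}$ yields the first inequality. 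The second inequality follows because $\nu_{\hat x}(y)$ and $\nu_{\hat y}(x)$ are unit tangent vectors to $\Pp(\R^d)$ at $\hat x$ and $\hat y$ respectively, so each derivative applied to them is bounded by the corresponding operator norm.

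The main step to get right is the symmetric identity for the ratio; once the factor $\nrm{Ax}/\nrm{Ay}$ is revealed on one side and its reciprocal on the other, the squared ratio becoming a product of two derivative norms makes AM--GM the natural tool. Everything else is bookkeeping with the decomposition $y = (x\cdot y)\,x + \nrm{\pi_{\hat x}(y)}\,\nu_{\hat x}(y)$ and the already-established formula~\eqref{D PhiA}.
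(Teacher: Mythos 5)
Your proof is correct and is essentially the same as the paper's: both rest on the identity $\nrm{(D\Phi_A)_{\hat x}(\nu_{\hat x}(y))} = \nrm{Ax\wedge Ay}/(\nrm{Ax}^2\nrm{x\wedge y})$ (derived from \eqref{D PhiA} and $\nrm{u\wedge v}=\nrm{u}\nrm{\pi_{\hat u}(v)}$), followed by the AM--GM bound $\sqrt{ab}\le\frac{1}{2}(a+b)$. The only organizational difference is that you make the exact product identity $\bigl[\delta(\Phi_A\hat x,\Phi_A\hat y)/\delta(\hat x,\hat y)\bigr]^2 = \nrm{(D\Phi_A)_{\hat x}(\nu_{\hat x}(y))}\,\nrm{(D\Phi_A)_{\hat y}(\nu_{\hat y}(x))}$ explicit before invoking AM--GM, whereas the paper applies AM--GM directly to $\nrm{Ax}^{-\alpha}\nrm{Ay}^{-\alpha}$ and then identifies the resulting terms with the derivative norms; the underlying computation is the same.
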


\begin{proof}
Given  unit vectors $x,y\in \R^d$,
\begin{align*}
\left[ \frac{\delta(\Phi_A(\hat x),\Phi_A(\hat y))}{\delta(\hat x,\hat y)} \right]^\alpha
& =  \left[  \frac{\nrm{A x \wedge A y}}{\nrm{A x}\nrm{A y}}\,\frac{1}{\nrm{x \wedge y}} \right]^\alpha\\
& = \left[  \frac{\nrm{A x \wedge A y}}{\nrm{x \wedge y}}\right]^\alpha
\, \frac{1}{\nrm{A x}^\alpha}\,\frac{1}{\nrm{A y}^\alpha}  \\
&\leq \left[  \frac{\nrm{A x \wedge A y}}{\nrm{x \wedge y}}\right]^\alpha
\,\frac{1}{2}\,\left\{  \frac{1}{\nrm{A x}^{2\alpha}} + \frac{1}{\nrm{A y}^{2\alpha}} \right\} \\
&=
\frac{1}{2}\,\left\{  \left[  \frac{\nrm{A x \wedge A y}}{\nrm{x \wedge y}}\right]^\alpha\,\frac{1}{\nrm{A x}^{2\alpha}} +
\left[  \frac{\nrm{A x \wedge A y}}{\nrm{x \wedge y}}\right]^\alpha\,\frac{1}{\nrm{A y}^{2\alpha}} \right\} \\
\end{align*}
where we have used that
$ \sqrt{a\,b} \leq \frac{1}{2}\,\left\{a+b\right\} $
with $a=\nrm{A x}^{-2\alpha}$ and $b=\nrm{A y}^{-2\alpha}$.
On the other hand for any non-zero vector $u\in\R^d$,
because $\nrm{u\wedge w}$ is the area of the parallelogram
spanned by $u$ and $w$, we must  have
$\nrm{u\wedge w}= \nrm{u}\,\nrm{\pi_{\hat u}(w)}$. Hence
$$ \nrm{\pi_{\hat u}(w) }= \frac{\nrm{u\wedge w}}{\nrm{u}} . $$
Using this relation one has
\begin{align*}
\nrm{ (D\Phi_A)_{\hat x} (\nu_{\hat x}(y)) }  &=
\nrm{ (D\Phi_A)_{\hat x} \left(\frac{\pi_{\hat x}(y)}{\nrm{x\wedge y}} \right) } \\
&=
 \frac{ \nrm{ \pi_{\Phi_A(\hat x)} \left( A\,\pi_{\hat x}(y)  \right) } }{\nrm{A x} \,\nrm{x\wedge y}  }   =
 \frac{   \nrm{ A x \wedge  A\, \pi_{\hat x}(y)    } }{\nrm{A x}^2 \,\nrm{x\wedge y}  }  \\
& =
 \frac{   \nrm{ A x \wedge  A y    } }{\nrm{A x}^2 \,\nrm{x\wedge y}  }  .
\end{align*}
Similarly, exchanging the roles of $x$ and $y$,
$$\nrm{ (D\Phi_A)_{\hat y} (\nu_{\hat y}(x)) }  =
 \frac{   \nrm{ A x \wedge  A y    } }{\nrm{A y}^2 \,\nrm{x\wedge y}  }  .$$
 This establishes the proposition.
\end{proof}

\bigskip

\begin{prop}
\label{prop alpha ineq}
Let   $(\Omega,\Fscr,\Pp)$ be a probability space,
and $A\colon \Omega\to \GL_d(\R)$ a  matrix valued random variable. Then for any $\alpha>0$,
$$ \sup_{\hat x\neq \hat y} \,
\E\left[
\left( \frac{\delta(\Phi_A(\hat x),\Phi_A(\hat y))}{\delta(\hat x,\hat y)} \right)^\alpha   \right]
= \sup_{\hat x \in\Pp(\R^d)} \,
\E\left[ \,
\nrm{ (D\Phi_A)_{\hat x}}^\alpha  \, \right] .
$$

\end{prop}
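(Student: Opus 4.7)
The plan is to establish the equality by proving the two inequalities $\leq$ and $\geq$ separately. The upper bound is immediate from Proposition~\ref{incr ratio PhiA alpha}, while the lower bound will follow from Remark~\ref{rmk diagonal} combined with a dominated convergence argument.

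For the direction $\leq$, fix $\hat x \neq \hat y$. Applied pointwise in $\omega$, the coarser bound of Proposition~\ref{incr ratio PhiA alpha} gives
\[ \left[ \frac{\delta(\Phi_A(\hat x), \Phi_A(\hat y))}{\delta(\hat x, \hat y)} \right]^\alpha \leq \frac{1}{2}\left\{ \nrm{(D\Phi_A)_{\hat x}}^\alpha + \nrm{(D\Phi_A)_{\hat y}}^\alpha \right\}. \]
Taking expectations, each term on the right is at most $\sup_{\hat x} \E[\nrm{(D\Phi_A)_{\hat x}}^\alpha]$; taking the supremum on the left over $\hat x \neq \hat y$ then yields the desired inequality.

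For the direction $\geq$, fix $\hat x \in \Pp(\R^d)$ and a unit tangent vector $v \in x^\perp$. Parametrize a projective line through $\hat x$ in the direction $v$ by $\hat y_t := [\cos t\cdot x + \sin t\cdot v]$ and let $t \to 0$. Remark~\ref{rmk diagonal} states that for each fixed $\omega$,
\[ \frac{\delta(\Phi_{A(\omega)}(\hat x), \Phi_{A(\omega)}(\hat y_t))}{\delta(\hat x, \hat y_t)} \xrightarrow[t \to 0]{} \nrm{(D\Phi_{A(\omega)})_{\hat x}(v)}. \]
Proposition~\ref{incr ratio PhiA alpha} itself furnishes a dominating envelope (assuming the natural integrability of the derivative norm along the path), so dominated convergence permits interchange of limit and expectation. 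This yields $\E[(\,\cdot\,)^\alpha] \to \E[\nrm{(D\Phi_A)_{\hat x}(v)}^\alpha]$, so the left-hand side of the proposition is bounded below by $\E[\nrm{(D\Phi_A)_{\hat x}(v)}^\alpha]$ for every $\hat x$ and every unit $v \in x^\perp$.

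The delicate step, which I expect to be the main obstacle, is to pass from the resulting lower bound $\sup_{\hat x,\,v}\E[\nrm{(D\Phi_A)_{\hat x}(v)}^\alpha]$ to the claimed $\sup_{\hat x}\E[\sup_v \nrm{(D\Phi_A)_{\hat x}(v)}^\alpha] = \sup_{\hat x} \E[\nrm{(D\Phi_A)_{\hat x}}^\alpha]$. The interchange of a supremum over $v$ with an expectation is not valid in general, since the direction attaining the operator norm of $(D\Phi_A)_{\hat x}$ will typically depend on $\omega$. In the $\SL_2$ setting that drives the paper this difficulty is vacuous: the tangent space $x^\perp$ is one-dimensional, so $\nrm{(D\Phi_A)_{\hat x}}$ agrees with $\nrm{(D\Phi_A)_{\hat x}(v)}$ for any unit $v \in x^\perp$ uniformly in $\omega$, and the two suprema trivially coincide. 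In general dimension one either settles for the weaker identity with $\sup_{\hat x,v}$ on the right-hand side, or else exploits a measurable selection of the maximizing direction together with the freedom to vary $\hat x$ to recover the full statement.
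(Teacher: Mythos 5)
Your approach coincides with the paper's: the $\leq$ direction is obtained by averaging the second inequality of Proposition~\ref{incr ratio PhiA alpha} and taking suprema, and the $\geq$ direction is meant to follow from Remark~\ref{rmk diagonal}. (The paper's proof is terser and contains a self-referential typo --- it cites the proposition being proved rather than Proposition~\ref{incr ratio PhiA alpha} --- but the intent is the same.) Your $\leq$ argument is complete. For the $\geq$ direction, Fatou's lemma suffices in place of dominated convergence and avoids any integrability hypothesis: since $r_t(\omega)^\alpha \to \nrm{(D\Phi_{A(\omega)})_{\hat x}(v)}^\alpha$ pointwise as $\hat y_t\to\hat x$ along the line through $v$, one gets $\E[\nrm{(D\Phi_A)_{\hat x}(v)}^\alpha] \le \liminf_t \E[r_t^\alpha] \le \sup_{\hat x\ne\hat y}\E[r^\alpha]$ directly, with no envelope needed.

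The gap you flag is genuine, and the paper's proof does not address it either. Letting $\hat y\to\hat x$ along a fixed direction $v$ yields the lower bound $\sup_{\hat x}\sup_{v}\E\left[\nrm{(D\Phi_A)_{\hat x}(v)}^\alpha\right]$, whereas the right-hand side of the proposition is $\sup_{\hat x}\E\left[\sup_{v}\nrm{(D\Phi_A)_{\hat x}(v)}^\alpha\right]$. In general $\sup_v\E[\cdot]\le\E[\sup_v\cdot]$, so this is the wrong inequality; the maximizing $v$ depends on $\omega$, and there is no obvious way to choose a single pair $(\hat x,\hat y)$ that realizes the $\omega$-dependent optimal direction simultaneously. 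You are right that in $d=2$ the tangent space to $\Pp(\R^2)$ is one-dimensional, so the two suprema coincide and the proof is complete. For $d>2$ the stated equality is, as far as the given argument goes, only an inequality $\le$, and a further idea (or a restatement with $\sup_{\hat x,v}\E[\nrm{(D\Phi_A)_{\hat x}(v)}^\alpha]$ on the right) would be needed. Worth noting for context: the paper uses only the $\le$ direction (to bound $\kappa_\alpha$ from above in Lemma~\ref{uniform kappa alpha}), and the numerical applications are all in $\SL_2$, so the gap does not propagate to the main results.
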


\begin{proof}
For the first inequality ($\leq$) just average the
one in Proposition~\ref{prop alpha ineq}
and then take sup. The converse inequality ($\geq $) follows from Remark~\ref{rmk diagonal}.
\end{proof}

\bigskip

\section*{Acknowledgements}

The first author was partially supported by  CNPq through the project 312698/2013-5.

The second author was supported by the Research Centre of Mathematics of the University of Minho with the Portuguese Funds from the ``Funda\c c\~ao para a Ci\^encia e a Tecnologia", through the Project UID/MAT/00013/2013.

%%\nocite{*}
%\bibliographystyle{amsplain}
%\bibliography{references}

\begin{thebibliography}{10}

\bibitem{Bou88}
Philippe Bougerol, \emph{Th\'eor\`emes limite pour les syst\`emes lin\'eaires
  \`a coefficients markoviens}, Probab. Theory Related Fields \textbf{78}
  (1988), no.~2, 193--221. \MR{945109}

\bibitem{DK-book}
Pedro Duarte and Silvius Klein, \emph{{L}yapunov exponents of linear cocycles;
  continuity via large deviations}, Atlantis Studies in Dynamical Systems,
  vol.~3, Atlantis Press, 2016.

\bibitem{FKe}
H.~Furstenberg and H.~Kesten, \emph{Products of random matrices}, Ann. Math.
  Statist. \textbf{31} (1960), 457--469. \MR{0121828}

\bibitem{F}
Harry Furstenberg, \emph{Noncommuting random products}, Trans. Amer. Math. Soc.
  \textbf{108} (1963), 377--428. \MR{0163345}

\bibitem{Fur}
\bysame, \emph{Noncommuting random products}, Trans. Amer. Math. Soc.
  \textbf{108} (1963), 377--428. \MR{0163345}

\bibitem{GMN}
Stefano Galatolo, Maurizio Monge, and Isaia Nisoli, \emph{Rigorous
  approximation of stationary measures and convergence to equilibrium for
  iterated function systems}, J. Phys. A \textbf{49} (2016), no.~27, 274001,
  22. \MR{3512100}

\bibitem{GN}
Stefano Galatolo and Isaia Nisoli, \emph{An elementary approach to rigorous
  approximation of invariant measures}, SIAM J. Appl. Dyn. Syst. \textbf{13}
  (2014), no.~2, 958--985. \MR{3216642}

\bibitem{HH}
Hubert Hennion and Lo{\"{\i}}c Herv{\'e}, \emph{Limit theorems for {M}arkov
  chains and stochastic properties of dynamical systems by quasi-compactness},
  Lecture Notes in Mathematics, vol. 1766, Springer-Verlag, Berlin, 2001.
  \MR{1862393 (2002h:60146)}

\bibitem{Herman}
Michael-R. Herman, \emph{Une m\'ethode pour minorer les exposants de
  {L}yapounov et quelques exemples montrant le caract\`ere local d'un
  th\'eor\`eme d'{A}rnol$\prime$d et de {M}oser sur le tore de dimension
  {$2$}}, Comment. Math. Helv. \textbf{58} (1983), no.~3, 453--502. \MR{727713}

\bibitem{LePage}
\'Emile Le~Page, \emph{R\'egularit\'e du plus grand exposant caract\'eristique
  des produits de matrices al\'eatoires ind\'ependantes et applications}, Ann.
  Inst. H. Poincar\'e Probab. Statist. \textbf{25} (1989), no.~2, 109--142.
  \MR{1001021}

\bibitem{Per}
Yuval Peres, \emph{Analytic dependence of {L}yapunov exponents on transition
  probabilities}, Lyapunov exponents ({O}berwolfach, 1990), Lecture Notes in
  Math., vol. 1486, Springer, Berlin, 1991, pp.~64--80. \MR{1178947}

\bibitem{Pol}
Mark Pollicott, \emph{Maximal {L}yapunov exponents for random matrix products},
  Invent. Math. \textbf{181} (2010), no.~1, 209--226. \MR{2651384}

\bibitem{RN-FA}
Frigyes Riesz and B\'ela Sz.-Nagy, \emph{Functional analysis}, Frederick Ungar
  Publishing Co., New York, 1955, Translated by Leo F. Boron. \MR{0071727}

\bibitem{Rue}
D.~Ruelle, \emph{Analycity properties of the characteristic exponents of random
  matrix products}, Adv. in Math. \textbf{32} (1979), no.~1, 68--80.
  \MR{534172}

\bibitem{ST}
Barry Simon and Michael Taylor, \emph{Harmonic analysis on {${\rm SL}(2,{\bf
  R})$} and smoothness of the density of states in the one-dimensional
  {A}nderson model}, Comm. Math. Phys. \textbf{101} (1985), no.~1, 1--19.
  \MR{814540}

\bibitem{Sternberg}
Shlomo Sternberg, \emph{Lectures on differential geometry}, Prentice-Hall,
  Inc., Englewood Cliffs, N.J., 1964. \MR{0193578}

\bibitem{Walters}
P.~Walters, \emph{An introduction to ergodic theory}, Graduate Texts in
  Mathematics, Springer New York, 2000.

\end{thebibliography}

\providecommand{\bysame}{\leavevmode\hbox to3em{\hrulefill}\thinspace}
\providecommand{\MR}{\relax\ifhmode\unskip\space\fi MR }
% \MRhref is called by the amsart/book/proc definition of \MR.
\providecommand{\MRhref}[2]{%
  \href{http://www.ams.org/mathscinet-getitem?mr=#1}{#2}
}
\providecommand{\href}[2]{#2}

\end{document}